\theoremstyle{plain}
\newtheorem{theorem}{Theorem}[section]
\newtheorem{lemma}[theorem]{Lemma}
\theoremstyle{definition}
\theoremstyle{remark}
\newtheorem{remark}{Remark}
\begin{document}

\articletype{Article}

\title{Gradient methods exploiting spectral properties}

\author{
\name{Yakui Huang\textsuperscript{a}, Yu-Hong Dai\textsuperscript{b*},\thanks{*Corresponding author. Email: dyh@lsec.cc.ac.cn} Xin-Wei Liu,\textsuperscript{a}
and Hongchao Zhang\textsuperscript{c}
}
\affil{\textsuperscript{a}Institute of Mathematics, Hebei University of Technology, Tianjin, China;
\textsuperscript{b}LSEC, Academy of Mathematics and Systems Science, Chinese Academy of Sciences, Beijing, China;
\textsuperscript{c}Department of Mathematics, Louisiana State University, Baton Rouge, LA 70803-4918, USA}
}

\maketitle

\begin{abstract}
We propose a new stepsize for the gradient method. It is shown that this new stepsize will converge to the reciprocal of the largest eigenvalue of
the Hessian, when Dai-Yang's asymptotic optimal gradient method (Computational Optimization and Applications, 2006, 33(1): 73-88) is applied for minimizing quadratic objective functions.
Based on this spectral property, we develop a monotone gradient method that takes a certain number of steps using the asymptotically  optimal stepsize by Dai and Yang,
and then follows by some short steps associated with this new stepsize. By employing one step retard of the asymptotic optimal stepsize, a nonmonotone variant of this method is also proposed.
Under mild conditions, $R$-linear convergence of the proposed methods is established for minimizing quadratic functions.
In addition, by combining gradient projection techniques and adaptive
nonmonotone line search, we further extend those methods for general bound constrained optimization.
Two variants of gradient projection methods combining with the Barzilai-Borwein stepsizes are also proposed.
Our numerical experiments on both quadratic and bound constrained optimization  indicate that the new proposed strategies and methods are very effective.
\end{abstract}

\begin{keywords}
gradient method; spectral property; Barizilai-Borwein methods; linear convergence; quadratic optimization; bound constrained optimization
\end{keywords}

\begin{amscode}
90C20; 90C25; 90C30
\end{amscode}
\section{Introduction}

We consider the problem of minimizing a convex quadratic function
\begin{equation}\label{eqpro}
  \min~~f(x)=\frac{1}{2}x^TAx-b^Tx
\end{equation}
and its extensions on bound constrained optimization,
where $b\in \mathbb{R}^n$ and $A\in \mathbb{R}^{n\times n}$ is symmetric positive definite with eigenvalues $0<\lambda_1\leq\lambda_2\leq\ldots\leq\lambda_n$ and condition number $\kappa=\frac{\lambda_n}{\lambda_1}$.
This problem \eqref{eqpro} is one of the simplest non-trivial non-linear programming problems and efficiently solving \eqref{eqpro}  is usually a pre-requisite for a method to be generalized for solving more general optimization. In addition, various optimization problems arising in many applications including machine learning \cite{cortes1995support}, sparse reconstruction \cite{figueiredo2007gradient}, nonnegative matrix factorization \cite{huang2015quadratic,lee1999learning} can be formulated as the form of \eqref{eqpro}, possibly with the addition of regularization or bound constraints.

The simplest and easily implemented method for solving \eqref{eqpro} is the gradient method, which updates the iterates by
\begin{equation}\label{eqitr}
  x_{k+1}=x_k-\alpha_kg_k,
\end{equation}
where $g_k=\nabla f(x_k)$ and $\alpha_k>0$ is the stepsize determined by different strategies.

The classic steepest descent (SD) method for solving \eqref{eqpro}  can be dated back to Cauchy \cite{cauchy1847methode}, who suggested to compute the stepsize by exact line search:
\begin{equation}\label{sd}
  \alpha_k^{SD}=\arg\min_{\alpha \in \mathbb{R}}~f(x_k-\alpha g_k)=\frac{g_{k}^Tg_{k}}{g_{k}^TAg_{k}}.
\end{equation}
It has been shown that the method converges linearly \cite{akaike1959successive} with $Q$-linear rate $\frac{\kappa-1}{\kappa+1}$.
Thus, the SD method can be very slow especially when the condition number is large. Further analysis shows that the gradients will asymptotically perform zigzag between two orthogonal directions
in the subspace spanned by the two eigenvectors corresponding to $\lambda_1$ and $\lambda_n$, see \cite{forsythe1968asymptotic,nocedal2002behavior} for more details.



In 1988, from the view of quasi-Newton method, Barzilai and Borwein \cite{Barzilai1988two} designed a method, called BB method, using
the following two ingenious stepsizes that significantly improve the performance of gradient methods:
\begin{equation}\label{sbb}
  \alpha_k^{BB1}=\frac{s_{k-1}^Ts_{k-1}}{s_{k-1}^Ty_{k-1}}, \quad \mbox{and} \quad \alpha_k^{BB2}=\frac{s_{k-1}^Ty_{k-1}}{y_{k-1}^Ty_{k-1}},
\end{equation}
where $s_{k-1}=x_k-x_{k-1}$ and $y_{k-1}=g_k-g_{k-1}$. The BB method was shown to be globally convergent for minimizing general $n$-dimensional strictly convex quadratics \cite{raydan1993Barzilai}
with $R$-linear  convergence rate \cite{dai2002r}. Recently, the BB method and its variants have been successfully extended to general unconstrained problems \cite{raydan1997barzilai}, to constrained optimization problems \cite{birgin2000nonmonotone,huang2016smoothing} and to various applications \cite{huang2015quadratic,jiang2013feasible,liu2011coordinated}. One may see  \cite{birgin2014spectral,dhl2018,di2018steplength,fletcher2005barzilai,yuan2008step} and the references therein.

Let $\{\xi_1,\xi_2,\ldots,\xi_n\}$ be the orthonormal eigenvectors associated with the eigenvalues. Denote the components of $g_k$ along the eigenvectors $\xi_i$ by $\mu_i^k$, $i=1,\ldots,n$, i.e.,
\begin{equation*}\label{gkmu}
  g_k=\sum_{i=1}^n\mu_i^k\xi_i.
\end{equation*}
The above decomposition of gradient $g_k$ together with the update rule \eqref{eqitr} give
\begin{equation*}
  g_{k+1}=g_k-\alpha_kAg_k=\prod_{j=1}^k(I-\alpha_jA)g_1=\sum_{i=1}^n\mu_i^{k+1}\xi_i,
\end{equation*}
where
\begin{equation*}\label{muitr}
  \mu_i^{k+1}=\mu_i^k(1-\alpha_k\lambda_i)=\mu_i^1\prod_{j=1}^k(1-\alpha_j\lambda_i).
\end{equation*}
This relation implies that the closer $\alpha_k$ to $\frac{1}{\lambda_i}$, the smaller $|\mu_i^{k+1}|$ would be. In addition, if $\mu_i^{k}=0$, the corresponding component will vanish at all subsequent iterations.

Since the SD method will asymptotically zigzag between $\xi_1$ and $\xi_n$, a natural way to break the zigzagging pattern is to eliminate the component $\mu_1^k$ or $\mu_n^k$, which can be achieved by employing a stepsize approximating $\frac{1}{\lambda_1}$ or $\frac{1}{\lambda_n}$. One seminal work in this line of research is due to
Yuan \cite{dai2005analysis,yuan2006new}, who derived the following stepsize by imposing finite termination for minimizing two-dimensional convex quadratics:
\begin{equation}\label{syv}
  \alpha_k^{Y}=\frac{2}{\sqrt{(1/\alpha_{k-1}^{SD}-1/\alpha_k^{SD})^2+4\|g_k\|^2/(\alpha_{k-1}^{SD}\|g_{k-1}\|)^2}+(1/\alpha_{k-1}^{SD}+1/\alpha_k^{SD})}.
\end{equation}
Based on \eqref{syv}, Dai and Yuan \cite{dai2005analysis} further suggested a new gradient method whose stepsize is given by
\begin{equation}\label{sdy}
  \alpha_k^{DY}=\left\{
                  \begin{array}{ll}
                    \alpha_k^{SD}, & \hbox{if mod($k$,4)$<2$;} \\
                    \alpha_k^{Y}, & \hbox{otherwise.}
                  \end{array}
                \right.
\end{equation}
The DY method \eqref{sdy} keeps monotonicity and appears better than the nonmonotone BB method \cite{dai2005analysis}. It is shown by De Asmundis et al. \cite{de2014efficient} that the stepsize $\alpha_k^{Y}$ converges to $\frac{1}{\lambda_n}$ if the SD method is applied to solve problem \eqref{eqpro}. That is, occasionally employing the stepsize $\alpha_k^{Y}$ along the SD method will enhance the elimination of the component $\mu_n$. Recently, Gonzaga and Schneider \cite{gonzaga2016steepest} suggest a monotone method with all stepsizes of the form \eqref{sd}.
Their method approximates $\frac{1}{\lambda_n}$ by a short stepsize calculated by replacing $g_k$ in \eqref{sd} with $\tilde{g}=(I-\eta A)g_k$ for some large scalar $\eta$.

Nonmonotone gradient methods exploiting spectral properties have been developed as well. Frassoldati et al. \cite{frassoldati2008new} developed a new short stepsize by maximizing the next SD stepsize $\alpha_{k+1}^{SD}$. They further suggested a method, called ABB$_{\min2}$, which tries to enforce BB1 stepsizes close to $\frac{1}{\lambda_1}$ by using short stepsizes to eliminate gradient components associated with large eigenvalues. More recently, based on the favorable property of $\alpha_k^{Y}$, De Asmundis et al. \cite{de2014efficient} suggested to reuse it in a cyclic fashion after a certain number of SD steps. Precisely, their approach, referred to as the SDC method, employs the stepsize
\begin{equation}\label{ssdc}
  \alpha_k^{SDC}=\left\{
                  \begin{array}{ll}
                    \alpha_k^{SD}, & \hbox{if mod$(k,h+s)<h$;} \\
                    \alpha_t^{Y}, & \hbox{otherwise, with $t=\max\{i\leq k: \textrm{mod}(i,h+s)=h$\},}
                  \end{array}
                \right.
\end{equation}
where $h\geq2$ and $s\geq1$. They also proposed a monotone version of \eqref{ssdc} by imposing safeguards on the stepsizes.

One common character of the aforementioned methods is making use of spectral properties of the stepsizes. The recent study \cite{di2018steplength} points out that gradient methods using long and short stepsizes that attempt to exploit the spectral properties have generally better numerical performance for minimizing both quadratic and general nonlinear objective functions.
For more works on gradient methods, see \cite{dai2003alternate,dai2005projected,de2014efficient,di2018steplength,frassoldati2008new,gonzaga2016steepest,yuan2008step,zhou2006gradient}.
In \cite{dai2006new2}, Dai and Yang  introduced a gradient method with a new stepsize that possesses similar spectral property as $\alpha_k^{Y}$. More specifically, their stepsize is given by
\begin{equation}\label{saopt}
  \alpha_k^{AOPT}=\frac{\|g_k\|}{\|Ag_k\|},
\end{equation}
which asymptotically converges to $\frac{2}{\lambda_1+\lambda_n}$, that is in some sense an optimal stepsize since it minimizes $\|I-\alpha A\|$ over the stepsize $\alpha$ \cite{dai2006new2,elman1994inexact}. Since $\alpha_k^{AOPT}\leq\alpha_k^{SD}$, the Dai-Yang method \eqref{saopt} is monotone but without using exact line searches. In addition, the method converges $Q$-linearly with the same rate as the SD method. More importantly, by applying this method it is possible to recover the eigenvectors $\xi_1$ and $\xi_n$.

In this paper, based on the Dai-Yang method \eqref{saopt}, we propose a new stepsize to exploit the spectral property. Particularly, our new stepsize is given by
\begin{equation}\label{baralp}
  \bar{\alpha}_{k}=\frac{d_k^Td_k}{d_k^TAd_k},
\end{equation}
where
\begin{equation}\label{dk}
  d_k=\frac{g_{k-1}}{\|g_{k-1}\|}-\frac{g_{k}}{\|g_{k}\|}.
\end{equation}
We show that the stepsize $\bar{\alpha}_k$ asymptotically converges to $\frac{1}{\lambda_n}$ if the Dai-Yang method \eqref{saopt} is applied to problem \eqref{eqpro}. Therefore, the stepsize $\bar{\alpha}_k$ is helpful in eliminating the gradient component $\mu_n$. Thanks to this desired property, we are able to develop a new efficient gradient method by taking a certain number of steps using
 the asymptotically optimal stepsize $\alpha_k^{AOPT}$ followed by some short steps, which are determined by the smaller one of $\alpha_k^{AOPT}$ and $\bar{\alpha}_{k-1}$. Thus, this method
is a monotone method without using exact line searches.
We also construct a nonmonotone variant of the method which simply use the stepsize $\alpha_k^{AOPT}$ with one step retard. $R$-linear convergence of the proposed methods is established for minimizing
strongly convex quadratic functions.  In addition, by combining gradient projection techniques  and the adaptive nonmonotone line search in \cite{dai2001adaptive}, we further to
extend those proposed methods for general bound constrained optimization. Two variants of gradient projection methods combining with the BB stepsizes are also proposed.
Our numerical comparisons with DY (\ref{sdy}), ABB$_{\min 2}$ \cite{frassoldati2008new} and SDC (\ref{ssdc}) methods on minimizing quadratic functions indicate the proposed strategies and methods
are very effective. Moreover, our numerical comparisons with the spectral projected gradient (SPG) method \cite{birgin2000nonmonotone,birgin2014spectral} on solving bound constrained optimization
problems from the CUTEst collection \cite{gould2015cutest} also highly suggest the potential benefits of extending the strategies and methods in the paper for more general
 large-scale bound constrained optimization.

The paper is organized as follows. In Section \ref{sec2}, we analyze the asymptotic spectral property of the new stepsize $\bar{\alpha}_k$ and propose our new methods based on this spectral property.
 In Section \ref{sec3}, we show that the new proposed methods have $R$-linear convergence for minimizing strongly convex quadratic functions.
 We generalize the proposed ideas and methods for bound constrained optimization in Section \ref{sec4}.
Some numerical comparisons of our new methods on solving both quadratic and bound constrained optimization problems are shown in Section \ref{sec5}.
Finally, in Section \ref{sec6} we give some concluding remarks.

\section{Method for quadratics}\label{sec2}
In this section we first analyze the spectral property of the stepsize $\bar{\alpha}_k$ and then propose our new gradient methods.

\subsection{Spectral property of $\bar{\alpha}_k$}

We first recall some important properties of the Dai-Yang method \eqref{saopt}.
\begin{lemma}\cite{dai2006new2}\label{lmdy}
For any starting point $x_1$ satisfying
\begin{equation*}
  \mu_1^1\neq0,~~\mu_n^1\neq0,
\end{equation*}
let $\{x_k\}$ be the iterations generated by the method \eqref{saopt}. Then we have that
\begin{equation*}
  \lim_{k\rightarrow\infty}\alpha_{k}^{AOPT}=\frac{2}{\lambda_1+\lambda_n}.
\end{equation*}
Furthermore,
\begin{equation*}
  \lim_{k\rightarrow\infty}\frac{\mu_i^{2k-1}}{\sqrt{\sum_{j=1}^n(\mu_j^{2k-1})^2}}=\left\{
  \begin{array}{ll}
   {\rm{sign}}(\mu_1^1)\sqrt{c_1}, &\hbox{if $i=1$;} \\
    0,&\hbox{if $i=2,\ldots,n-1$;} \\
    {\rm{sign}}(\mu_n^1)\sqrt{c_2},&\hbox{if $i=n$,}
  \end{array}
\right.
\end{equation*}
and
\begin{equation*}
  \lim_{k\rightarrow\infty}\frac{\mu_i^{2k}}{\sqrt{\sum_{j=1}^n(\mu_j^{2k})^2}}=\left\{
  \begin{array}{ll}
   {\rm{sign}}(\mu_1^1)\sqrt{c_1}, &\hbox{if $i=1$;} \\
    0,&\hbox{if $i=2,\ldots,n-1$;} \\
    -{\rm{sign}}(\mu_n^1)\sqrt{c_2},&\hbox{if $i=n$,}
  \end{array}
\right.
\end{equation*}
which indicates that
\begin{equation*}
  \lim_{k\rightarrow\infty}\frac{g_{k-1}}{\|g_{k-1}\|}+\frac{g_{k}}{\|g_{k}\|}=2\rm{sign}(\mu_1^1)\sqrt{c_1}\xi_1
\end{equation*}
and
\begin{equation*}
  \lim_{k\rightarrow\infty}\frac{g_{k-1}}{\|g_{k-1}\|}-\frac{g_{k}}{\|g_{k}\|}=\pm2\sqrt{c_2}\xi_n,
\end{equation*}
where
\begin{equation*}
  c_1=\frac{\lambda_1+3\lambda_n}{4(\lambda_1+\lambda_n)},~~
  c_2=\frac{3\lambda_1+\lambda_n}{4(\lambda_1+\lambda_n)}.
\end{equation*}
\end{lemma}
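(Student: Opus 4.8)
The plan is to adapt the classical Forsythe-type asymptotic analysis of gradient methods on strictly convex quadratics to the stepsize $\alpha_k^{AOPT}=\|g_k\|/\|Ag_k\|$. Write $q_i^k=(\mu_i^k)^2/\|g_k\|^2$ for the energy fraction of $g_k$ in the eigendirection $\xi_i$, so that $\sum_{i=1}^n q_i^k=1$, and put $\beta_k:=1/\alpha_k^{AOPT}=\|Ag_k\|/\|g_k\|=\bigl(\sum_{i=1}^n\lambda_i^2 q_i^k\bigr)^{1/2}$. From $\mu_i^{k+1}=\mu_i^k(1-\lambda_i/\beta_k)$ one gets the autonomous simplex dynamics $q_i^{k+1}=q_i^k(1-\lambda_i/\beta_k)^2/Z_k$, where $Z_k=\sum_j q_j^k(1-\lambda_j/\beta_k)^2=\|g_{k+1}\|^2/\|g_k\|^2$. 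Since $\lambda_1\|g_k\|\le\|Ag_k\|\le\lambda_n\|g_k\|$ we have $\beta_k\in[\lambda_1,\lambda_n]$, so $1-\lambda_1/\beta_k\ge0$ and $1-\lambda_n/\beta_k\le0$; equality $1-\lambda_1/\beta_k=0$ (resp. $1-\lambda_n/\beta_k=0$) would force $\mu_i^k=0$ for all $i\ne1$ (resp. all $i\ne n$), and a short induction from $\mu_1^1\mu_n^1\ne0$ then shows $\mu_1^k\mu_n^k\ne0$ for every $k$, that $1-\lambda_1/\beta_k>0$ and $1-\lambda_n/\beta_k<0$ strictly, that $\mathrm{sign}(\mu_1^k)\equiv\mathrm{sign}(\mu_1^1)$, and that $\mathrm{sign}(\mu_n^k)=(-1)^{k-1}\mathrm{sign}(\mu_n^1)$. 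We may also use the already known fact that $g_k\to0$ $Q$-linearly (recalled in the Introduction).

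The first and main step is the reduction to the two extreme eigendirections, namely $q_i^k\to0$ for every $2\le i\le n-1$. The heuristic is that $\lambda\mapsto(1-\lambda/\beta)^2$ is a strictly convex quadratic, so on $[\lambda_1,\lambda_n]$ its maximum is attained at an endpoint; hence every step contracts a middle $q_i^k$ relative to $\max\{q_1^k,q_n^k\}$, the only exception being when $\beta_k$ happens to sit close to $\lambda_i$ itself. Ruling out that $\beta_k$ can linger near an interior eigenvalue is the delicate point; it is handled by a limiting-set argument — the $\omega$-limit set $\Omega$ of $\{(q_1^k,\dots,q_n^k)\}$ is nonempty, compact and invariant under the simplex map, and one shows that no point of $\Omega$ can have a nonzero middle component, since such a point would pin $\beta$ near an interior value along $\Omega$ and contradict the contraction just described. \textbf{This reduction is the chief obstacle of the proof}, exactly as in the corresponding analyses of the steepest-descent and Yuan stepsizes.

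Once $q_i^k\to0$ for $1<i<n$, the asymptotic behaviour is governed by the pair $(q_1^k,q_n^k)$, which satisfies $q_1^k+q_n^k\to1$, and the dynamics collapse — up to vanishing perturbations — to a scalar recursion. Put $\rho_k:=q_n^k/q_1^k$, which is well defined and positive for all $k$ by the first paragraph. Then $\beta_k^2=(\lambda_1^2+\lambda_n^2\rho_k)/(1+\rho_k)+o(1)$ and $\rho_{k+1}=\phi(\rho_k)+o(1)$, where $\phi(\rho)=\rho\bigl((\lambda_n-\beta(\rho))/(\beta(\rho)-\lambda_1)\bigr)^2$ and $\beta(\rho)=\bigl((\lambda_1^2+\lambda_n^2\rho)/(1+\rho)\bigr)^{1/2}$. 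A short computation with the two-step map $\phi\circ\phi$ shows $\phi(\phi(\rho))>\rho$ for small $\rho$ and $\phi(\phi(\rho))<\rho$ for large $\rho$ (using $(\lambda_1+\lambda_n)^2>4\lambda_1\lambda_n$), so the iterates stay bounded away from $0$ and $\infty$; the equation $\phi(\rho)=\rho$ forces $(\lambda_n-\beta)^2=(\beta-\lambda_1)^2$, hence $\beta^*=(\lambda_1+\lambda_n)/2$ and the unique positive fixed point $\rho^*=(3\lambda_1+\lambda_n)/(\lambda_1+3\lambda_n)$; and a direct differentiation gives $\phi'(\rho^*)\in(-1,0)$, whence $\rho_k\to\rho^*$ (with damped oscillation, consistent with the sign flip of $\mu_n$). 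Translating back, $q_n^k\to\rho^*/(1+\rho^*)=c_2$ and $q_1^k\to1/(1+\rho^*)=c_1$; since $\lambda_1^2c_1+\lambda_n^2c_2=(\lambda_1+\lambda_n)^2/4$, this gives $\beta_k\to(\lambda_1+\lambda_n)/2$, i.e. $\alpha_k^{AOPT}\to2/(\lambda_1+\lambda_n)$.

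Finally I would read off the componentwise statements. From $q_i^k\to0$ for $1<i<n$, together with $q_1^k\to c_1$ and $q_n^k\to c_2$, we get $|\mu_i^k|/\|g_k\|\to0$, $|\mu_1^k|/\|g_k\|\to\sqrt{c_1}$ and $|\mu_n^k|/\|g_k\|\to\sqrt{c_2}$; attaching the signs — $\mathrm{sign}(\mu_1^k)\equiv\mathrm{sign}(\mu_1^1)$ always, while $\mathrm{sign}(\mu_n^k)=\mathrm{sign}(\mu_n^1)$ for odd $k$ and $\mathrm{sign}(\mu_n^k)=-\mathrm{sign}(\mu_n^1)$ for even $k$ — yields exactly the stated limits of $\mu_i^k/(\sum_j(\mu_j^k)^2)^{1/2}$ along the odd and even subsequences. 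The two remaining identities are then immediate: for consecutive indices $k-1$ and $k$ the $\xi_1$-components of $g_{k-1}/\|g_{k-1}\|$ and $g_k/\|g_k\|$ have the same limit while the $\xi_n$-components have opposite limits, so $g_{k-1}/\|g_{k-1}\|+g_k/\|g_k\|\to2\,\mathrm{sign}(\mu_1^1)\sqrt{c_1}\,\xi_1$ and $g_{k-1}/\|g_{k-1}\|-g_k/\|g_k\|\to\pm2\sqrt{c_2}\,\xi_n$, the sign depending on the parity of $k$.
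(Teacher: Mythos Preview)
The paper does not prove this lemma at all; it is quoted verbatim from Dai and Yang \cite{dai2006new2} and then used as a black box in the proofs of Theorems~\ref{th1} and~\ref{th2}. So there is no in-paper proof to compare against.

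Assessed on its own, your plan has the correct architecture --- it is the Akaike/Forsythe asymptotic analysis transported to the stepsize $\alpha_k^{AOPT}$ --- and the concrete computations are right: the sign bookkeeping for $\mu_1^k$ and $\mu_n^k$, the fixed-point equation $(\lambda_n-\beta)^2=(\beta-\lambda_1)^2$ giving $\beta^*=(\lambda_1+\lambda_n)/2$, the value $\rho^*=(3\lambda_1+\lambda_n)/(\lambda_1+3\lambda_n)$, and the identifications $c_1=1/(1+\rho^*)$, $c_2=\rho^*/(1+\rho^*)$ all check out. Two steps, however, are only gestured at and would need real work. First, the reduction $q_i^k\to0$ for $2\le i\le n-1$: your $\omega$-limit sentence (``such a point would pin $\beta$ near an interior value along $\Omega$ and contradict the contraction'') is exactly the statement to be proved, not an argument for it; you still have to rule out invariant sets of the simplex map touching the interior coordinates, and this is where the genuine analysis lives. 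Second, the global convergence of $\rho_k$: local stability $\phi'(\rho^*)\in(-1,0)$ together with your endpoint signs of $\phi\circ\phi-\mathrm{id}$ gives boundedness but not convergence --- you must still exclude period-two orbits of $\phi$ (equivalently, show $\rho^*$ is the \emph{only} positive fixed point of $\phi\circ\phi$), and you must justify passing from the perturbed recursion $\rho_{k+1}=\phi(\rho_k)+o(1)$ to convergence of $\rho_k$ itself. Both gaps are fillable along the lines you indicate, but as written they are plans, not proofs.
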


Lemma \ref{lmdy} indicates that the method \eqref{saopt} asymptotically conducts its searches in the two-dimensional subspace spanned by $\xi_1$ and $\xi_n$.
So, in order to accelerate the convergence, we could employ some stepsizes approximating $\frac{1}{\lambda_1}$ or $\frac{1}{\lambda_n}$ to eliminate the component $\mu_1^k$ or $\mu_n^k$. Note that the vector $d_k$ given in (\ref{dk}) tends to align in the direction of the eigenvector $\xi_n$. Hence, if we take some consecutive gradient steps with stepsize
$\alpha_{k}^{AOPT}$ so that $d_k\approx\pm2\sqrt{c_2}\xi_n$, the stepsize $\bar{\alpha}_k$ will be an approximation of $\frac{1}{\lambda_n}$. The next theorem provides theoretical
justification for this strategy.

\begin{theorem}\label{th1}
  Under the conditions in Lemma \ref{lmdy}, let $\{g_k\}$ be the sequence generated by applying the method \eqref{saopt} to problem \eqref{eqpro}. Then we have
\begin{equation*}\label{spes2}
  \lim_{k\rightarrow\infty}\bar{\alpha}_k=\frac{1}{\lambda_n}.
\end{equation*}
\end{theorem}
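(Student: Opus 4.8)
The plan is to express $\bar\alpha_k$ entirely in terms of the eigencomponents $\mu_i^k$ and then take the limit using the asymptotic behavior of those components provided by Lemma~\ref{lmdy}. First I would write $d_k = \sum_{i=1}^n \delta_i^k \xi_i$ where $\delta_i^k = \mu_i^{k-1}/\|g_{k-1}\| - \mu_i^k/\|g_k\|$, and use the orthonormality of the eigenvectors to get the clean formulas
\begin{equation*}
  d_k^T d_k = \sum_{i=1}^n (\delta_i^k)^2, \qquad d_k^T A d_k = \sum_{i=1}^n \lambda_i (\delta_i^k)^2,
\end{equation*}
so that $\bar\alpha_k = \left(\sum_i (\delta_i^k)^2\right) \big/ \left(\sum_i \lambda_i (\delta_i^k)^2\right)$. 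This is a convex combination of the $1/\lambda_i$ weighted by $(\delta_i^k)^2$, reciprocated; hence it suffices to show that, in the limit, all the weight concentrates on the index $i=n$, i.e. that $(\delta_n^k)^2 \big/ \sum_i (\delta_i^k)^2 \to 1$.

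Second, I would invoke Lemma~\ref{lmdy} in its normalized form. The lemma tells us that along even and odd subsequences the normalized gradient $g_k/\|g_k\|$ converges to $\mathrm{sign}(\mu_1^1)\sqrt{c_1}\,\xi_1 \pm \mathrm{sign}(\mu_n^1)\sqrt{c_2}\,\xi_n$, with the sign on the $\xi_n$ term alternating. Consequently $d_k = g_{k-1}/\|g_{k-1}\| - g_k/\|g_k\|$ has its $\xi_1$-components cancel asymptotically while its $\xi_n$-components add, giving $d_k \to \pm 2\sqrt{c_2}\,\xi_n$ — this is exactly the last displayed limit in Lemma~\ref{lmdy}. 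In eigencomponent language: $\delta_i^k \to 0$ for all $i \neq n$ and $(\delta_n^k)^2 \to 4 c_2 > 0$. Plugging into the Rayleigh-quotient expression for $\bar\alpha_k$ and using continuity of the quotient away from $d_k = 0$ (the denominator stays bounded away from $0$ since $(\delta_n^k)^2 \to 4c_2 > 0$ and $\lambda_n > 0$) yields
\begin{equation*}
  \lim_{k\to\infty} \bar\alpha_k = \frac{4 c_2}{\lambda_n \cdot 4 c_2} = \frac{1}{\lambda_n}.
\end{equation*}

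I would, however, be careful about one technical subtlety that is the main obstacle: Lemma~\ref{lmdy} as quoted gives the limits of the normalized components $\mu_i^k/\sqrt{\sum_j (\mu_j^k)^2}$ only along the two parity subsequences $2k-1$ and $2k$ separately, so I must check that $d_k$ pairs a consecutive even index with a consecutive odd index (which it does, since $d_k$ involves iterates $k-1$ and $k$) and that the sign patterns indeed produce cancellation of the $\xi_1$-part rather than reinforcement. This is precisely what makes $d_k$ align with $\xi_n$ rather than $\xi_1$: the $\xi_1$-coefficient has the same limit $\mathrm{sign}(\mu_1^1)\sqrt{c_1}$ on both subsequences, so it cancels in the difference, whereas the $\xi_n$-coefficient flips sign, so it doubles. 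Once this parity bookkeeping is pinned down, the rest is the routine limit computation sketched above; the paper even states the needed limit $d_k \to \pm 2\sqrt{c_2}\,\xi_n$ directly in Lemma~\ref{lmdy}, so I would simply cite that and then evaluate the Rayleigh quotient of $A$ at a vector converging to a multiple of the unit eigenvector $\xi_n$.
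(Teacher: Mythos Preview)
Your proposal is correct and follows essentially the same route as the paper: both proofs decompose $d_k$ in the eigenbasis and pass to the limit using the asymptotics from Lemma~\ref{lmdy}. The only cosmetic difference is that the paper expands $d_k^Td_k$ and $d_k^TAd_k$ as inner products of normalized gradients and computes their limits term by term (getting $2-2(c_1-c_2)$ and $4\lambda_n c_2$), whereas you shortcut this by invoking the already-stated limit $d_k\to\pm2\sqrt{c_2}\,\xi_n$ from Lemma~\ref{lmdy} and then evaluating the Rayleigh quotient of $A$ at that limiting direction; your parity discussion and denominator-nondegeneracy remark make the argument complete.
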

\begin{proof}
From the definition \eqref{dk} of $d_k$, we have
\begin{equation}\label{ddk}
  d_k^Td_k=2-2\frac{g_{k-1}^Tg_k}{\|g_{k-1}\|\|g_k\|}
\end{equation}
and
\begin{align}\label{dadk}
  d_k^TAd_k&=\frac{g_{k-1}^TAg_{k-1}}{\|g_{k-1}\|^2}+
  \frac{g_k^TAg_k}{\|g_k\|^2}
  -2\frac{g_{k-1}^TAg_k}{\|g_{k-1}\|\|g_k\|},
\end{align}
which indicate that
\begin{align}\label{limddk}
  \lim_{k\rightarrow\infty}d_k^Td_k&=
  2-2\lim_{k\rightarrow\infty}\sum_{j=1}^n\frac{\mu_i^{2k-1}}{\sqrt{\sum_{j=1}^n(\mu_j^{2k-1})^2}}\frac{\mu_i^{2k}}{\sqrt{\sum_{j=1}^n(\mu_j^{2k})^2}}
  \nonumber\\
  &=2-2(c_1-c_2)
\end{align}
and
\begin{align}\label{limdadk}
  \lim_{k\rightarrow\infty}d_k^TAd_k&=
  \lim_{k\rightarrow\infty}\frac{\sum_{j=1}^n\lambda_i(\mu_i^{2k-1})^2}{\sum_{j=1}^n(\mu_j^{2k-1})^2}
  +\frac{\sum_{j=1}^n\lambda_i(\mu_i^{2k})^2}{\sum_{j=1}^n(\mu_j^{2k})^2}\nonumber\\
  &-2\sum_{j=1}^n\lambda_i\frac{\mu_i^{2k-1}}{\sqrt{\sum_{j=1}^n(\mu_j^{2k-1})^2}}\frac{\mu_i^{2k}}{\sqrt{\sum_{j=1}^n(\mu_j^{2k})^2}}
  \nonumber\\
  &=2(\lambda_1c_1+\lambda_nc_2)-2(\lambda_1c_1-\lambda_nc_2)\nonumber\\
  &=4\lambda_nc_2.
\end{align}
It follows from \eqref{limddk}, \eqref{limdadk} and the definition \eqref{baralp} of $\bar{\alpha}_{k}$ that
\begin{align*}\label{balpk}
  \lim_{k\rightarrow\infty}\bar{\alpha}_{k}
  &=\lim_{k\rightarrow\infty}\frac{d_k^Td_k}{d_k^TAd_k}
  =\frac{2-2(c_1-c_2)}{4\lambda_nc_2}\\
&=\frac{4(\lambda_1+\lambda_n)-(2\lambda_n-2\lambda_1)}{2\lambda_n(3\lambda_1+\lambda_n)}
=\frac{1}{\lambda_n}.
\end{align*}
This completes the proof.
\end{proof}

Using the same argument as those in Theorem \ref{th1}, we can also get the following result.
\begin{theorem}\label{th2}
  Under the conditions in Lemma \ref{lmdy}, let $\{g_k\}$ be the sequence generated by applying the method \eqref{saopt} to problem \eqref{eqpro}, we have
\begin{equation*}
  \lim_{k\rightarrow\infty}\hat{\alpha}_k=\frac{1}{\lambda_1},
\end{equation*}
where
\begin{equation*}\label{hatalp}
  \hat{\alpha}_{k}=\frac{\hat{d}_k^T\hat{d}_k}{\hat{d}_k^TA\hat{d}_k}
\end{equation*}
with
\begin{equation*}\label{hatdk}
  \hat{d}_k=\frac{g_{k-1}}{\|g_{k-1}\|}+\frac{g_{k}}{\|g_{k}\|}.
\end{equation*}
\end{theorem}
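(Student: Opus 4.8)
\textbf{Proof plan for Theorem \ref{th2}.} The plan is to mimic the proof of Theorem \ref{th1} verbatim, replacing the ``difference'' vector $d_k$ by the ``sum'' vector $\hat{d}_k = g_{k-1}/\|g_{k-1}\| + g_k/\|g_k\|$, and exploiting the fact that Lemma \ref{lmdy} already gives us the asymptotic behavior of both normalized gradients simultaneously. First I would expand, exactly as in \eqref{ddk} and \eqref{dadk},
\begin{equation*}
  \hat{d}_k^T\hat{d}_k = 2 + 2\frac{g_{k-1}^Tg_k}{\|g_{k-1}\|\|g_k\|}, \qquad
  \hat{d}_k^TA\hat{d}_k = \frac{g_{k-1}^TAg_{k-1}}{\|g_{k-1}\|^2} + \frac{g_k^TAg_k}{\|g_k\|^2} + 2\frac{g_{k-1}^TAg_k}{\|g_{k-1}\|\|g_k\|};
\end{equation*}
the only change from the $d_k$ case is the sign of the cross term.

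Next I would take limits using the componentwise limits from Lemma \ref{lmdy}. The key cross-product limit is
\begin{equation*}
  \lim_{k\to\infty}\frac{g_{k-1}^Tg_k}{\|g_{k-1}\|\|g_k\|} = \lim_{k\to\infty}\sum_{i=1}^n \frac{\mu_i^{2k-1}}{\sqrt{\sum_j(\mu_j^{2k-1})^2}}\,\frac{\mu_i^{2k}}{\sqrt{\sum_j(\mu_j^{2k})^2}} = c_1 - c_2,
\end{equation*}
using that the $i=1$ terms contribute $(\sqrt{c_1})^2 = c_1$, the $i=n$ terms contribute $\sqrt{c_2}\cdot(-\sqrt{c_2}) = -c_2$, and all intermediate terms vanish. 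Therefore $\lim \hat{d}_k^T\hat{d}_k = 2 + 2(c_1-c_2)$. Similarly, weighting each component by $\lambda_i$,
\begin{equation*}
  \lim_{k\to\infty}\hat{d}_k^TA\hat{d}_k = 2(\lambda_1 c_1 + \lambda_n c_2) + 2(\lambda_1 c_1 - \lambda_n c_2) = 4\lambda_1 c_1.
\end{equation*}

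Finally I would substitute $c_1 = \frac{\lambda_1+3\lambda_n}{4(\lambda_1+\lambda_n)}$ and $c_2 = \frac{3\lambda_1+\lambda_n}{4(\lambda_1+\lambda_n)}$ into
\begin{equation*}
  \lim_{k\to\infty}\hat{\alpha}_k = \frac{2 + 2(c_1-c_2)}{4\lambda_1 c_1}
\end{equation*}
and simplify: the numerator becomes $2 + 2\cdot\frac{2\lambda_n - 2\lambda_1}{4(\lambda_1+\lambda_n)} = \frac{4(\lambda_1+\lambda_n) + 2\lambda_n - 2\lambda_1}{2(\lambda_1+\lambda_n)} = \frac{2\lambda_1 + 6\lambda_n}{2(\lambda_1+\lambda_n)} = \frac{\lambda_1 + 3\lambda_n}{\lambda_1+\lambda_n}$, while the denominator becomes $4\lambda_1\cdot\frac{\lambda_1+3\lambda_n}{4(\lambda_1+\lambda_n)} = \frac{\lambda_1(\lambda_1+3\lambda_n)}{\lambda_1+\lambda_n}$, and the ratio collapses to $1/\lambda_1$, as claimed. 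There is essentially no obstacle here beyond bookkeeping: the substantive analytic content — that the normalized gradients converge to fixed combinations of $\xi_1$ and $\xi_n$ — is supplied entirely by Lemma \ref{lmdy}, and the only difference from Theorem \ref{th1} is the flip in the sign of the cross terms, which causes the limit to pick out $\lambda_1$ instead of $\lambda_n$. The one point worth stating carefully is that the limit is taken along the full sequence rather than along even/odd subsequences; since both $\mu_i^{2k-1}$ and $\mu_i^{2k}$ normalized limits are given and the pair $(g_{k-1},g_k)$ always consists of one odd- and one even-indexed gradient, the product is insensitive to the parity of $k$, so the full limit exists.
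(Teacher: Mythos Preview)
Your proposal is correct and is exactly the approach the paper intends: the paper does not give a separate proof of Theorem~\ref{th2} but simply states that it follows by ``the same argument as those in Theorem~\ref{th1}'', and you have carried out precisely that argument with the sign of the cross term flipped. Your remark about parity is a careful addition that makes explicit something the paper's proof of Theorem~\ref{th1} leaves implicit.
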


\subsection{The algorithm}

Theorems \ref{th1} and \ref{th2} in the former subsection provide us the possibility of employing the two stepsizes $\hat{\alpha}_{k}$ and $\bar{\alpha}_{k}$
to significantly reduce the gradient components $\mu_1^k$ and $\mu_n^k$. However, the following example shows some negative aspects of using $\hat{\alpha}_{k}$.
Particularly, we applied the method \eqref{saopt} to a problem of \eqref{eqpro} with
\begin{equation}\label{tp1}
  A=diag\{a_1,a_2,\ldots,a_n\} \quad \mbox{and} \quad b=0,
\end{equation}
where $a_1=1$, $a_n=n$ and $a_i$ is randomly generated in $(1,n)$, $i=2,\ldots,n-1$. Figure \ref{appstep1} presents the result of an instance with $n=1,000$. We can see that $\bar{\alpha}_k$ approximates $\frac{1}{\lambda_n}$ with satisfactory accuracy in a few iterations. However,  $\hat{\alpha}_{k}$ converges to $\frac{1}{\lambda_1}$ very slowly in the first few hundreds of iterations.
although we did observe that after 1,000 iterations the value of $|\hat{\alpha}_{k}-\frac{1}{\lambda_1}|$ is reduced by a factor of 0.01.


\begin{figure}[h]
  \centering
  \includegraphics[width=0.75\textwidth,height=0.48\textwidth]{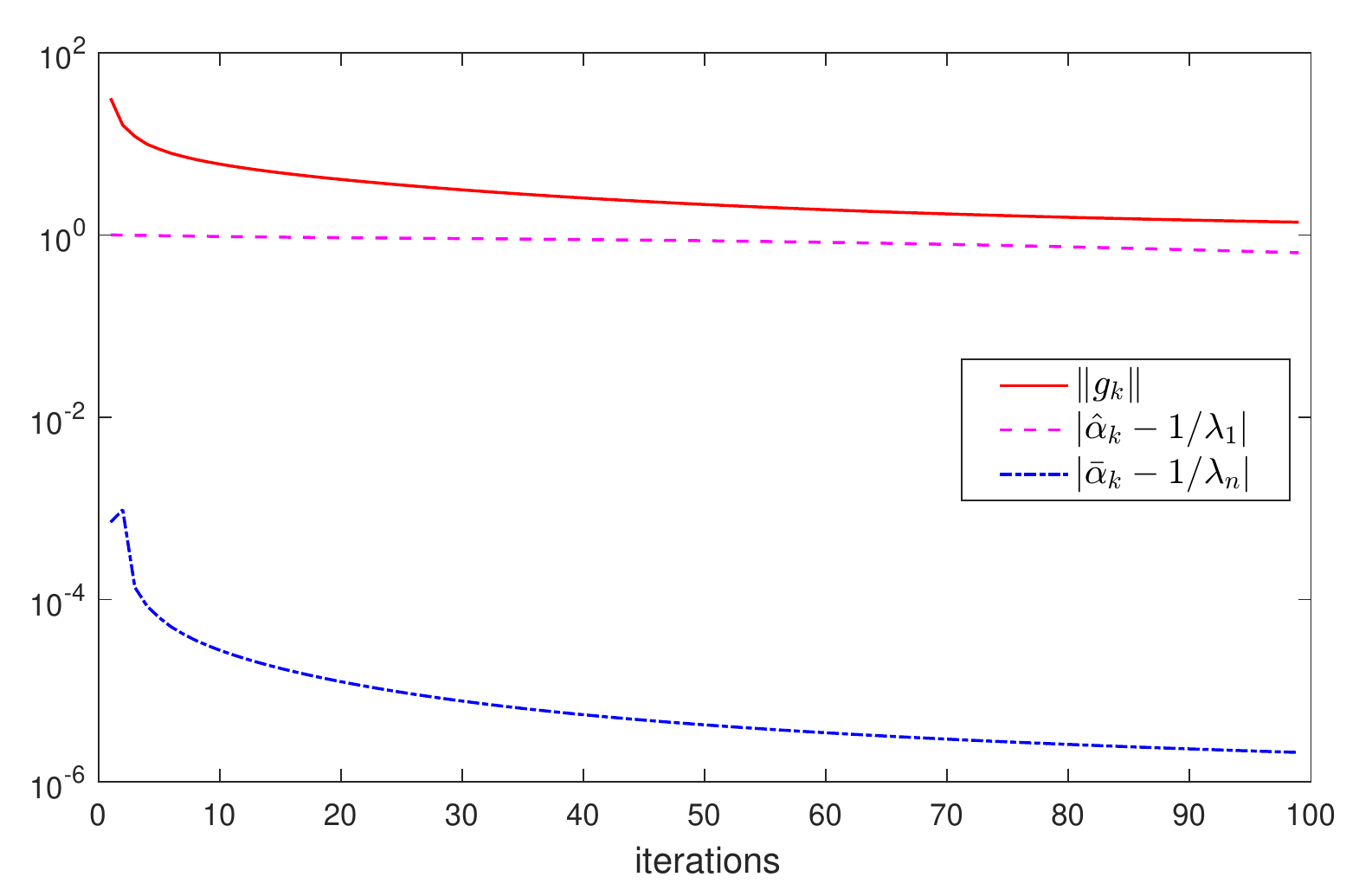}\\
  \caption{Problem \eqref{tp1}: convergence history of the sequence $\{\hat{\alpha}_k\}$ and $\{\bar{\alpha}_k\}$ for the first 100 iterations of the method \eqref{saopt}.}\label{appstep1}
\end{figure}



Now we would like to give a rough explanation of the above phenomenon. Since the gradient norm decreases very slowly, by the update rule \eqref{eqitr}, we have
\begin{equation*}\label{aprgk}
  \frac{g_{k}^{(i)}}{\|g_{k}\|}=(1-\alpha_k\lambda_i)\frac{g_{k-1}^{(i)}}{\|g_{k-1}\|}\frac{\|g_{k-1}\|}{\|g_{k}\|}
  \approx(1-\alpha_k\lambda_i)\frac{g_{k-1}^{(i)}}{\|g_{k-1}\|}.
\end{equation*}
Suppose that $\alpha_k$ satisfies $|1-\alpha_k\lambda_i|\leq1$ for all $i=1,2,\ldots,n$ and $k$ is sufficiently large. This will be true since $\alpha_k^{AOPT}$ approximates $\frac{2}{\lambda_1+\lambda_n}$ as the iteration process goes on. Let $d_k^{(i)}$ be the $i$-th component of $d_k$, i.e.,
\begin{equation*}
  d_k^{(i)}=\frac{g_{k}^{(i)}}{\|g_{k}\|}-\frac{g_{k-1}^{(i)}}{\|g_{k-1}\|}.
\end{equation*}
We consider the following cases:

\textbf{Case 1}. $|1-\alpha_k\lambda_i|\leq0.6$.

By trivial computation we know that after five steps the value of $\frac{|g_{k}^{(i)}|}{\|g_{k}\|}$ is less than 8\% of its initial value and keeps small at all subsequent iterations
Thus, $d_k^{(i)}$ can be neglected.

\textbf{Case 2}. $|1-\alpha_k\lambda_i|>0.6$.

(i) If $1-\alpha_k\lambda_i\geq0.9$, the value of $\frac{g_{k}^{(i)}}{\|g_{k}\|}$ will not change much and thus, $d_k^{(i)}$ may not affect the value of $\bar{\alpha}_k$ too much,
 which indicates that the component can be also neglected.

(ii) If $0.6<1-\alpha_k\lambda_i<0.9$, then $0.1<\alpha_k\lambda_i<0.4$, which implies that $d_k^{(i)}$ will be small in a few iterations and is safe to be abandoned.



(iii) If $1-\alpha_k\lambda_i<-0.6$, the value of $\frac{g_{k}^{(i)}}{\|g_{k}\|}$ changes signs and hence, $|d_k^{(i)}|$ may get significant increase.

The above analysis shows that $\bar{\alpha}_k$ will be mostly determined by the components corresponding to those eigenvalues in (iii) of Case 2. Notice that the required inequality in (iii) implies that $\lambda_i>\frac{4(\lambda_1+\lambda_n)}{5}$. If $A$ has many large eigenvalues satisfying this condition, $\bar{\alpha}_k$ will be an estimation of
the reciprocal of certain average of large eigenvalues. When $A$ has few such large eigenvalues, $\bar{\alpha}_k$ will be mostly determined
by the gradient components corresponding to the these few largest eigenvalues, which would
yield a good estimation of $\frac{1}{\lambda_n}$. So, $\bar{\alpha}_k$ will approximate $\frac{1}{\lambda_n}$ with satisfactory accuracy in small number of iterations.
 This coincides with our observation in Figure \ref{appstep1}.

For the stepsize $\hat{\alpha}_k$, when $1-\alpha_k\lambda_i>0$, the value of $\frac{g_{k}^{(i)}}{\|g_{k}\|}+\frac{g_{k-1}^{(i)}}{\|g_{k-1}\|}$ may increase even when $1-\alpha_k\lambda_i\leq0.1$.
So, most of the components of the gradient corresponding to those eigenvalues less than $\frac{1}{\alpha_k}$ will affect the value of $\hat{\alpha}_k$. Thus, $\hat{\alpha}_{k}$ would not be a good approximation of $\frac{1}{\lambda_1}$ until those components become very small. Moreover, a rough estimation of $\frac{1}{\lambda_1}$ may yield a large step which will increase most of
 the components of the gradient. As a result, it is impractical to use $\hat{\alpha}_{k}$ for eliminating the gradient component $\mu_1^k$.

Based on the above observations, our method would combine the stepsizes $\bar{\alpha}_k$ and $\alpha_{k}^{AOPT}$. In particular, our method takes $h$ steps with $\alpha_{k}^{AOPT}$ to drive $\bar{\alpha}_{k}$ towards a good approximation of $\frac{1}{\lambda_n}$ and then takes $s$ short steps in the hope of eliminating the corresponding component $\mu_n$.
As $\bar{\alpha}_{k}$ is expected to be short, we resort to $\alpha_{k}^{AOPT}$ if $\bar{\alpha}_{k}$ is relatively large. Hence, more precisely, we take
\begin{equation}\label{news0}
\alpha_{k}=
\begin{cases}
\alpha_{k}^{AOPT},& \text{if $\textrm{mod}(k,h+s)<h$}; \\
\min\{\alpha_{k}^{AOPT},\bar{\alpha}_{k}\},& \text{otherwise}.
\end{cases}
\end{equation}


Notice that, for quadratics, the BB1 stepsize $\alpha_{k}^{BB1}$ is just the former SD stepsize \eqref{sd}. As we know, the BB method performs much better than the SD method \cite{fletcher2005barzilai,yuan2008step}. And gradient methods with retard stepsizes often have better performances \cite{friedlander1998gradient}.
Moreover, it can be seen from Figure \ref{appstep1} that $\bar{\alpha}_{k-1}$ is also a good approximation of $\frac{1}{\lambda_n}$ after about 10 to 20 iterations.
Thus, we also consider to use the retard stepsize $\bar{\alpha}_{k-1}$, i.e.,
\begin{equation} \label{news}
\alpha_{k}=
\begin{cases}
\alpha_{k}^{AOPT},& \text{if $\textrm{mod}(k,h+s)<h$}; \\
\min\{\alpha_{k}^{AOPT},\bar{\alpha}_{k-1}\},& \text{otherwise}.
\end{cases}
\end{equation}
Numerical comparisons  between the methods \eqref{news0} and \eqref{news} in Table \ref{tbcmpdy} show the benefits of using the one stepsize delay.
Table \ref{tbcmpdy} lists the averaged iterations of these two methods on solving $10$ instances of problem \eqref{tp1}, where the condition number of $A$ is $\kappa=10^4$ with $a_1=1$, $a_n=\kappa$, and other diagonal elements are randomly generated in $(1,\kappa)$. The iteration was stopped once the gradient norm is less than an $\epsilon$ factor of its initial value.
We can see that the performance of the method \eqref{news} dominates that of \eqref{news0} for most of the instances.
Another advantage of using the retard stepsize  $\bar{\alpha}_{k-1}$ is that it can be easily extended to more general problems. This will be more clear in Section \ref{sec4}.
\begin{table}[ht!b]
\setlength{\tabcolsep}{0.2ex}
\tbl{Number of averaged iterations of the methods \eqref{news0} and \eqref{news}.}
{\begin{scriptsize}
\begin{tabular}{|c|c|c|c|c|c|c|c|c|c|c|c|}
\hline
 \multicolumn{1}{|c|}{\multirow{2}{*}{method}} &\multicolumn{1}{c|}{\multirow{2}{*}{$\epsilon$}}
 &\multicolumn{10}{c|}{$(h,s)$ for the method}\\
\cline{3-12}
 \multicolumn{1}{|c|}{} &\multicolumn{1}{c|}{}   &$(10,20)$ &$(10,30)$ &$(10,50)$ &$(10,80)$  &$(10,100)$  &$(20,20)$ &$(20,30)$ &$(20,50)$ &$(20,80)$  &$(20,100)$
\\
  \hline
%

\multicolumn{1}{|c|}{\multirow{3}{*}{\eqref{news0}}}
&\multicolumn{1}{c|}{\multirow{1}{*}{$10^{-6}$}}	
&298.9 &287.3 &308.7 &321.4 &336.9 &306.5 &333.5 &331.2 &342.5 &371.7\\
&\multicolumn{1}{c|}{\multirow{1}{*}{$10^{-9}$}}
&711.5 &636.3 &650.2 &606.0 &602.9 &785.7 &680.3 &582.5 &545.7 &655.0\\
&\multicolumn{1}{c|}{\multirow{1}{*}{$10^{-12}$}}
&1048.0 &893.1 &1013.7 &910.3 &816.3 &1225.8 &989.8 &904.1 &773.8 &892.4\\
\hline

\multicolumn{1}{|c|}{\multirow{3}{*}{\eqref{news}}}
&\multicolumn{1}{c|}{\multirow{1}{*}{$10^{-6}$}}	
&318.8 &312.7 &311.5 &350.9 &333.7 &341.0 &337.1 &342.0 &362.2 &349.2\\
&\multicolumn{1}{c|}{\multirow{1}{*}{$10^{-9}$}}
&642.6 &581.3 &566.0 &561.0 &525.3 &650.6 &655.0 &636.5 &539.6 &571.3\\
&\multicolumn{1}{c|}{\multirow{1}{*}{$10^{-12}$}}
&957.6 &789.9 &772.0 &771.0 &752.5 &952.6 &881.1 &839.3 &755.3 &766.4\\
\hline
\end{tabular}
\end{scriptsize}
}\label{tbcmpdy}
\end{table}

\begin{remark}
Although our method \eqref{news} looks like the SDC method \eqref{ssdc}, they differ in the following ways: (i) the method \eqref{news} does not use exact line searches which are necessary for the SDC method; (ii) the method \eqref{news} does not reuse any stepsize while the SDC method uses the same Yuan's stepsize $\alpha_k^{Y}$ for $s$ steps; (iii) the method \eqref{news} is monotone while the SDC method is nonmonotone and its monotone version is obtained by using a safeguard with $2\alpha_{k}^{SD}$.
\end{remark}

\begin{figure}[th]
  \centering
  \includegraphics[width=0.75\textwidth,height=0.48\textwidth]{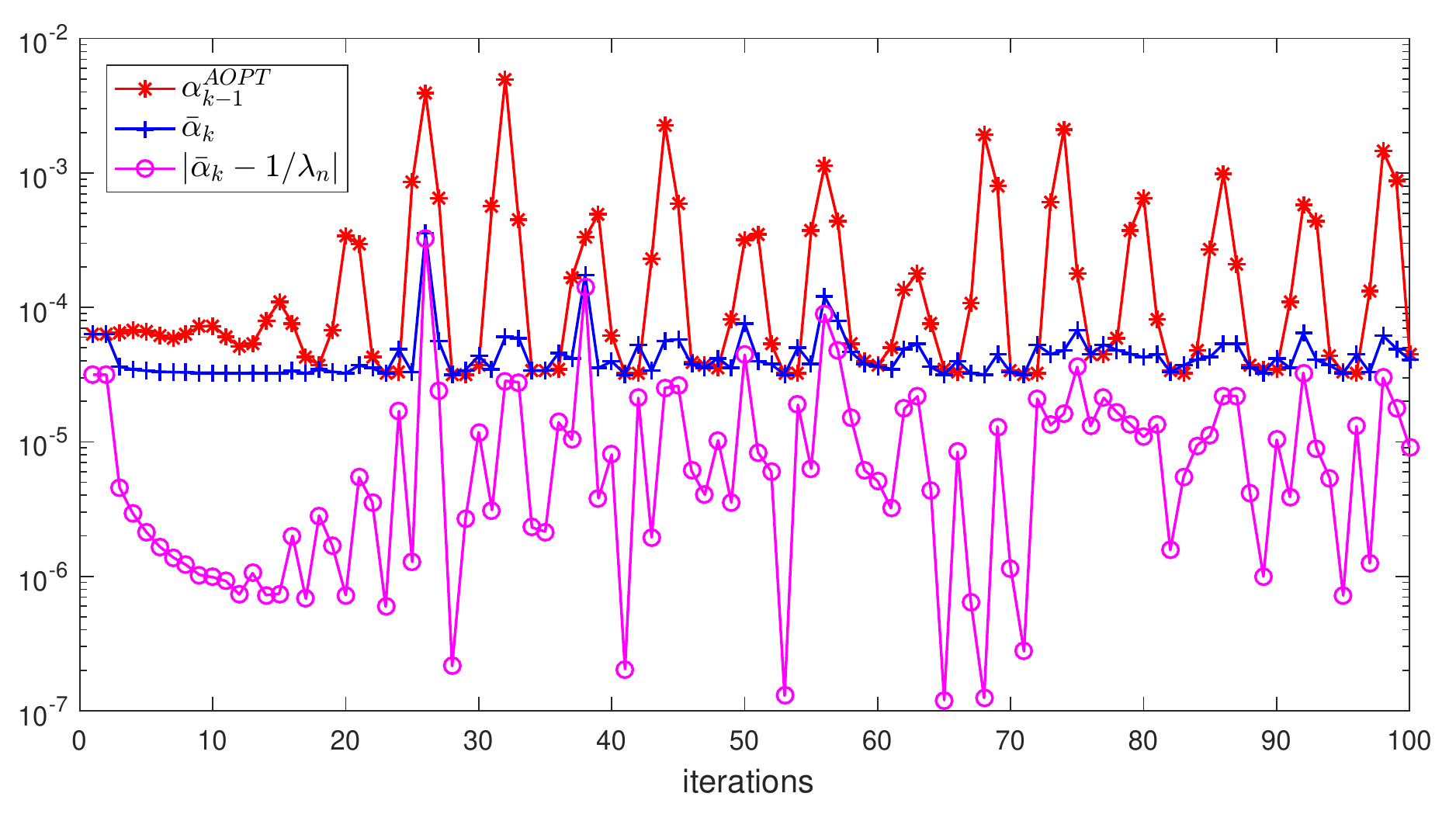}\\
  \caption{Problem \eqref{tp1}: history of the sequence $\{\bar{\alpha}_k\}$ for the first 100 iterations of the gradient method with $\alpha_{k-1}^{AOPT}$.}\label{appstep2}
\end{figure}

%
%

The analysis at the beginning of this subsection indicates that a small $\bar{\alpha}_{k}$ will be generated once its value is mostly determined by the first few largest eigenvalues.
This can be achieved by using short stepsizes such that $|1-\alpha_k\lambda_i|\leq1$ hold for all $i=1,2,\ldots,n$ and several subsequent iterations. In addition, if there exist subsequences $\{\alpha_{k_i}\}$ approximate $\frac{1}{\lambda_i}$ for all but the first few largest eigenvalues, $\bar{\alpha}_{k}$ would be also small.
Notice that the retard stepsize $\alpha_{k-1}^{AOPT}$ is an approximation of some $\frac{1}{\lambda_i}$ and also short in the sense that $\alpha_{k-1}^{AOPT}\leq\alpha_{k-1}^{SD}$.
In fact, we can see from Figure \ref{appstep2} that, when the gradient method with $\alpha_{k-1}^{AOPT}$ is applied to problem \eqref{tp1}, the stepsize $\bar{\alpha}_{k}$
approximates $\frac{1}{\lambda_n}$ with high accuracy if its value is small.
Moreover, some promising numerical results of applying $\alpha_{k-1}^{AOPT}$ are given in \cite{dai2015positive}.
So, motivated by the above observation and analysis, we also suggest the following nonmonotone variant of (\ref{news}):
\begin{equation}\label{news2}
\alpha_{k}=
\begin{cases}
\alpha_{k-1}^{AOPT},& \text{if $\textrm{mod}(k,h+s)<h$}; \\
\min\{\alpha_{k-1}^{AOPT},\bar{\alpha}_{k-1}\},& \text{otherwise}.
\end{cases}
\end{equation}

\section{Convergence}\label{sec3}
In this section, we establish the $R$-linear convergence of the method \eqref{news} and its nonmonotone variant \eqref{news2} for minimizing strongly convex quadratic function.
Since the gradient method \eqref{eqitr} is invariant under translations and rotations when applying to problem \eqref{eqpro}, we make the following assumption throughout the analysis.

\textbf{Assumption 1.} The matrix $A$ is diagonal, i.e.,
\begin{equation}\label{formA}
  A=\textrm{diag}\{\lambda_1,\lambda_2,\cdots,\lambda_n\},
\end{equation}
with $0<\lambda_1<\lambda_2<\cdots<\lambda_n$.

In order to give a unified analysis of the methods \eqref{news} and \eqref{news2}, we recall the following property given by Dai \cite{dai2003alternate}.

\noindent
\textbf{Property (A)} \cite{dai2003alternate}. Suppose that there exist an integer $m$ and positive constants $M_1\geq\lambda_1$ and $M_2$ such that
\begin{itemize}
  \item[(i)] $\lambda_1\leq\alpha_k^{-1}\leq M_1$;
  \item[(ii)] for any integer $l\in[1,n-1]$ and $\epsilon>0$, if $G(k-j,l)\leq\epsilon$ and $(g_{k-j}^{(l+1)})^2\geq M_2\epsilon$ hold for $j\in[0,\min\{k,m\}-1]$, then $\alpha_k^{-1}\geq\frac{2}{3}\lambda_{l+1}$.
\end{itemize}
Here,
\begin{equation*}
  G(k,l)=\sum_{i=1}^l(g_k^{(i)})^2.
\end{equation*}

Dai \cite{dai2003alternate} has proved that if $A$ has the form \eqref{formA} with $1=\lambda_1\leq\lambda_2\leq\cdots\leq\lambda_n$ and the stepsizes of gradient method \eqref{eqitr} have the Property (A), then either $g_k=0$ for some finite $k$ or the sequence $\{\|g_k\|\}$ converges to zero $R$-linearly.
Therefore, in order to establish $R$-linear convergence of the methods \eqref{news} and \eqref{news2}, we only need to show these methods satisfy Property (A).
\begin{theorem}\label{th4}
Suppose that the sequence $\{\|g_k\|\}$ is generated by any of the method \eqref{news} or \eqref{news2} applied to solve problem (\ref{eqpro}) with the matrix $A$ having the form \eqref{formA} and $1=\lambda_1<\lambda_2<\cdots<\lambda_n$. Then either $g_k=0$ for some finite $k$ or the sequence $\{\|g_k\|\}$ converges to zero $R$-linearly.
\end{theorem}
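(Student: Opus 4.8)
The plan is to verify that the stepsizes in both \eqref{news} and \eqref{news2} satisfy Dai's Property (A), after which the conclusion follows immediately from Dai's result \cite{dai2003alternate} quoted just before the statement. So the whole task reduces to checking conditions (i) and (ii) of Property (A) with a suitable choice of the integer $m$ and the constants $M_1$ and $M_2$.

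First I would establish condition (i), the uniform bounds $\lambda_1 \le \alpha_k^{-1} \le M_1$. For the non-retarded ingredients this is straightforward: for any $v \neq 0$ the Rayleigh-quotient estimate gives $\lambda_1 \le (v^TAv)/(v^Tv) \le \lambda_n$, so both $(\alpha_k^{AOPT})^{-1} = \|Ag_k\|/\|g_k\|$ (since $\|Ag_k\|^2/\|g_k\|^2 \ge \lambda_1^2$ and $\le \lambda_n^2$, hence its square root lies in $[\lambda_1,\lambda_n]$) and $\bar\alpha_k^{-1} = (d_k^TAd_k)/(d_k^Td_k)$ lie in $[\lambda_1,\lambda_n]$ whenever $d_k \ne 0$. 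Taking the minimum of two such quantities only makes $\alpha_k$ larger, i.e. $\alpha_k^{-1}$ stays in $[\lambda_1,\lambda_n]$; and using the one-step-retarded values $\alpha_{k-1}^{AOPT}$, $\bar\alpha_{k-1}$ does not change the range. Thus $M_1 = \lambda_n$ works, and one only needs to note that $d_k = 0$ (equivalently $g_{k-1}/\|g_{k-1}\| = g_k/\|g_k\|$) forces $g_k$ to be an eigenvector, in which case the next step makes $g_{k+1}=0$; so on the non-terminating tail all the quotients are well defined.

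Next I would check condition (ii). Fix $l \in [1,n-1]$ and $\epsilon > 0$, and suppose $G(k-j,l) \le \epsilon$ and $(g_{k-j}^{(l+1)})^2 \ge M_2\epsilon$ for $j = 0,\dots,\min\{k,m\}-1$. The idea is that under these hypotheses the normalized gradient is, up to a controlled error, concentrated in coordinates $l+1,\dots,n$, so every relevant Rayleigh quotient is bounded below by something close to $\lambda_{l+1}$; choosing $M_2$ large enough (depending only on the $\lambda_i$) makes the error small enough to conclude $\alpha_k^{-1} \ge \tfrac23\lambda_{l+1}$. Concretely, for $\alpha_k^{AOPT}$ one writes $(\alpha_k^{AOPT})^{-2} = \sum_i \lambda_i^2 (g_k^{(i)})^2 / \sum_i (g_k^{(i)})^2 \ge \lambda_{l+1}^2 \sum_{i>l}(g_k^{(i)})^2 / \|g_k\|^2$, and since $\sum_{i\le l}(g_k^{(i)})^2 \le \epsilon$ while $\|g_k\|^2 \ge (g_k^{(l+1)})^2 \ge M_2\epsilon$, the ratio $\sum_{i>l}(g_k^{(i)})^2/\|g_k\|^2 \ge 1 - 1/M_2$, giving $(\alpha_k^{AOPT})^{-1} \ge \lambda_{l+1}\sqrt{1-1/M_2} \ge \tfrac23\lambda_{l+1}$ once $M_2 \ge 9/5$. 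For the retarded variant the same computation applies verbatim at index $k-1$ (which is covered by the hypothesis as long as $m \ge 2$). The step requiring care is $\bar\alpha_k$ (and $\bar\alpha_{k-1}$): here one must lower-bound $(d_k^TAd_k)/(d_k^Td_k)$, and $d_k$ mixes the normalized gradients at two consecutive iterates, so a priori the small low-index components of $g_{k-1}$ and $g_k$ could partially cancel in $d_k$ and become proportionally large. The remedy is to use that $\alpha_k$ in \eqref{news}/\eqref{news2} is the \emph{minimum} of $\bar\alpha_k$ (or $\bar\alpha_{k-1}$) and the AOPT stepsize: it suffices to bound $\alpha_k^{-1}$ from below, and $\alpha_k^{-1} = \max\{(\alpha_k^{AOPT})^{-1}, \bar\alpha_k^{-1}\} \ge (\alpha_k^{AOPT})^{-1} \ge \tfrac23\lambda_{l+1}$ by the bound just proved. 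Hence $\bar\alpha_k$ never needs to be analysed in isolation for condition (ii), which is the key simplification. Setting, say, $m = 2$, $M_1 = \lambda_n$, $M_2 = 2$ then makes both methods satisfy Property (A), and Dai's theorem yields that either $g_k = 0$ for some finite $k$ or $\{\|g_k\|\}$ converges to zero $R$-linearly. I expect the only genuinely delicate point to be the bookkeeping that handles the "otherwise" (short-step) iterations uniformly with the AOPT iterations in condition (ii) — but as indicated, the min-structure of the stepsize lets the AOPT bound carry the argument in every case.
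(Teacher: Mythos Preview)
Your proposal is correct and follows essentially the same route as the paper: verify Property~(A) with $m=2$, $M_1=\lambda_n$, $M_2=2$, using the Rayleigh-quotient bounds for condition~(i) and the key observation that the $\min$-structure of the stepsize gives $\alpha_k^{-1}\ge(\alpha^{AOPT})^{-1}$ (at index $k$ for \eqref{news}, at $k-1$ for \eqref{news2}), so one never needs to analyse $\bar\alpha$ in isolation for condition~(ii). The only cosmetic difference is that the paper bounds $(\alpha^{AOPT})^{-1}$ via the intermediate inequality $\alpha_k^{AOPT}\le\alpha_k^{SD}$ and then estimates the SD Rayleigh quotient, whereas you bound $\|Ag\|/\|g\|$ directly through the squared quotient; both give the required $\tfrac23\lambda_{l+1}$ with $M_2=2$.
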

\begin{proof}
We show that the stepsize $\alpha_k$ has Property (A) with $m=2$, $M_1=\lambda_n$ and $M_2=2$.

Clearly, $\lambda_1\leq\alpha_k^{-1}\leq \lambda_n$ for all $k\geq1$. Thus, (i) of Property (A) holds with $M_1=\lambda_n$.

Notice that $\alpha_{k}^{AOPT}\leq\alpha_{k}^{SD}$. For the method \eqref{news2}, we have that
\begin{equation}\label{eqalfnew2}
  \alpha_{k}^{-1}\geq(\alpha_{k-1}^{AOPT})^{-1}\geq(\alpha_{k-1}^{SD})^{-1}.
\end{equation}
Suppose that $G(k-j,l)\leq\epsilon$ and $(g_{k-j}^{(l+1)})^2\geq 2\epsilon$ hold for $j\in[0,\min\{k,m\}-1]$. It follows from \eqref{eqalfnew2} and the definition of $\alpha_{k}^{SD}$ that
\begin{align*}\label{invsd}
\alpha_{k}^{-1}&\geq(\alpha_{k-1}^{SD})^{-1}
=\frac{\sum_{i=1}^n\lambda_i(g^{(i)}_{k-1})^2}
  {\sum_{i=1}^n(g^{(i)}_{k-1})^2}
  \geq\frac{\lambda_{l+1}\sum_{i=l+1}^n(g^{(i)}_{k-1})^2}
{G(k-1,l)+\sum_{i=l+1}^n(g^{(i)}_{k-1})^2}\nonumber\\
&\geq\frac{\lambda_{l+1}}
{\epsilon/2\epsilon+1}\geq\frac{2}{3}\lambda_{l+1},
\end{align*}
where the first inequality in the second line is due to the assumption and $k-1\in\{\max\{k-1,0\},\ldots,k\}$. Thus, (ii) holds for method the \eqref{news2}.
For the method \eqref{news}, we obtain the desired inequality by replacing $k-1$ with $k$ in the above analysis.
This completes the proof.
\end{proof}

\section{Extension to bound constrained optimization}
\label{sec4}
In this section, we would like to extend the strategies and the stepsize (\ref{baralp}) discussed in previous sections for the bound constrained optimization.
\begin{equation}\label{conprob}
\min_{x\in\Omega} ~~f(x),
\end{equation}
where $f$ is a Lipschitz continuously differentiable function defined on the set
$\Omega=\{x\in\mathbb{R}^n|~l\leq x\leq u\}$. Here, $l \le x \le u$ means componentwise
$l_i \le x_i \le u_i$ for all $i=1, \ldots, n$.
Clearly, when $l_i=-\infty$ and $u_i=+\infty$ for all $i$, problem \eqref{conprob} reduces to an unconstrained problem.

Our algorithm belongs to the class of projected gradient methods, which update the iterates as
\[
  x_{k+1}=x_{k}+\lambda_{k}d_{k},
\]
where $\lambda_{k}$ is a step length determined by some line searches and $d_k$ is the search direction given by
\begin{equation}\label{dirc}
  d_k=P_{\Omega}(x_k-\alpha_kg_k)-x_k.
\end{equation}
Here, $P_{\Omega}(\cdot)$ is the Euclidean projection onto $\Omega$ and $\alpha_k$ is our proposed stepsize.

For a general objective function, both $\alpha_{k-1}^{AOPT}$ in (\ref{saopt}) and $\bar{\alpha}_{k-1}$ in (\ref{baralp}) can not be computed as in
Section \ref{sec2} because the Hessian is usually difficult to obtain. Hence, we replace $\alpha_{k-1}^{AOPT}$ by the following positive stepsize suggested by Dai et al. \cite{dai2015positive}:
\begin{equation}\label{pdy0}
  \alpha_{k}^{P}=\frac{\|s_{k-1}\|}{\|y_{k-1}\|}.
\end{equation}
It is easy to see this stepsize is the geometrical mean of the two BB stepsizes in (\ref{sbb})
and it will reduce to $\alpha_{k-1}^{AOPT}$ when the objective function is quadratic.
One may see \cite{dai2015positive} for details about $\alpha_{k}^{P}$.
Note that by applying gradient projection methods for bound constrained optimization,
the variables which are at the boundary usually changes during early
iterations and often become unchanged at the end. So, the algorithm usually eventually solves an
unconstrained problem in the subspace corresponding to free variables. Hence, for bound constrained
optimization, we modify the stepsize (\ref{pdy0}) as the following:
\begin{equation}\label{pdy}
  \bar{\alpha}_{k}^{P}=\frac{\|s_{k-1}\|}{\|\bar{y}_{k-1}\|},
\end{equation}
where
\begin{equation} \label{bary}
  \bar{y}_{k-1}^{(i)}=\left\{
                    \begin{array}{ll}
                      0, & \hbox{if $s_{k-1}^{(i)}=0$;} \\
                      g_k^{(i)}-g_{k-1}^{(i)}, & \hbox{otherwise.}
                    \end{array}
                  \right.
\end{equation}

We now reformulate the stepsize $\bar{\alpha}_{k-1}$ in (\ref{baralp}) for general functions.
In fact, for quadratics, by the update rule \eqref{eqitr} we have
\begin{equation*}
  g_{k-1}^Tg_k=\|g_{k-1}\|^2-\alpha_{k-1}g_{k-1}^TAg_{k-1}=g_{k-1}^TAg_{k-1}(\alpha_{k-1}^{SD}-\alpha_{k-1})
\end{equation*}
and
\begin{equation*}
  g_{k-1}^TAg_k=g_{k-1}^TAg_{k-1}-\alpha_{k-1}g_{k-1}^TA^2g_{k-1}=g_{k-1}^TA^2g_{k-1}(\alpha_{k-1}^{MG}-\alpha_{k-1}),
\end{equation*}
which together with \eqref{baralp}, \eqref{ddk} and \eqref{dadk} give
\begin{equation*}
  \bar{\alpha}_k=\frac{2-2\frac{g_{k-1}^Tg_k}{\|g_{k-1}\|\|g_k\|}}{\frac{1}{\alpha_{k-1}^{SD}}+
  \frac{1}{\alpha_{k}^{SD}} -2\frac{g_{k-1}^TAg_k}{\|g_{k-1}\|\|g_k\|}}
 =\frac{2-2\frac{\|g_{k-1}\|}{\|g_k\|}\frac{1}{\alpha_{k}^{BB1}}(\alpha_{k}^{BB1}-\alpha_{k-1})} {\frac{1}{\alpha_{k}^{BB1}}+
  \frac{1}{\alpha_{k}^{SD}} -2\frac{\|g_{k-1}\|}{\|g_k\|}\frac{1}{\alpha_{k}^{BB1}\alpha_{k}^{BB2}}(\alpha_{k}^{BB2}-\alpha_{k-1})}.
\end{equation*}
Similarly as before, we would modify the BB stepsizes in the above formula, and replace $\alpha_{k}^{BB1}$ and $\alpha_{k}^{BB2}$ by
\begin{equation}\label{bsbb-1-2}
 \bar{\alpha}_k^{BB1}  =\frac{s_{k-1}^Ts_{k-1}}{s_{k-1}^T\bar{y}_{k-1}} \quad \mbox{and} \quad  \bar{\alpha}_k^{BB2}=\frac{s_{k-1}^T\bar{y}_{k-1}}{\bar{y}_{k-1}^T\bar{y}_{k-1}},
\end{equation}
respectively, where $\bar{y}_{k-1}$ is given in (\ref{bary}). Note that in fact $\alpha_k^{BB1}$ automatically
takes care of the changes of free variables since $\alpha_k^{BB1} = \bar{\alpha}_k^{BB1}$.
Then, by replacing the iteration number $k$ with $k-1$, we have
\begin{align}\label{newsunc}
  \bar{\alpha}_{k-1}
 &=\frac{2-2\frac{\|g_{k-2}\|}{\|g_{k-1}\|}\frac{1}{\bar{\alpha}_{k-1}^{BB1}}(\bar{\alpha}_{k-1}^{BB1}-\alpha_{k-2})} {\frac{1}{\bar{\alpha}_{k-1}^{BB1}}+
  \frac{1}{\bar{\alpha}_{k}^{BB1}} -2\frac{\|g_{k-2}\|}{\|g_{k-1}\|}\frac{1}{\bar{\alpha}_{k-1}^{BB1}
  \bar{\alpha}_{k-1}^{BB2}}(\bar{\alpha}_{k-1}^{BB2}-\alpha_{k-2})}.
\end{align}

To ensure global convergence and achieve good performance, nonmonotone line searches \cite{dai2001adaptive,grippo1986nonmonotone,zhang2004nonmonotone} are usually employed for BB-like methods.
Here, we prefer to use the adaptive nonmonotone line search proposed by Dai and Zhang \cite{dai2001adaptive},
which is designed to accept BB stepsizes as frequently as possible. Particularly, the step length
$\lambda_{k} = 1$ is accepted if
\begin{equation} \label{nonmls}
f(x_{k}+ d_{k})\leq f_{r} +\sigma g_{k}^Td_{k},
\end{equation}
where $f_{r}$ is the so-called reference function value adaptively updated by the rules given
in \cite{dai2001adaptive} and  $\sigma\in(0,1)$ is a line search parameter.
However, when (\ref{nonmls}) is not accepted, an Armijo-type back tracking line search is performed to
find the step length $\lambda_{k}$ satisfying a relatively more strict condition
\begin{equation} \label{nonmls-2}
f(x_{k}+\lambda_{k}d_{k})\leq \min\{f_{\max},f_{r}\}+\sigma\lambda_{k}g_{k}^Td_{k},
\end{equation}
where $f_{\max}$ is the maximal function value in recent $M$ iterations, i.e.,
\begin{equation*}
  f_{\max}=\max_{0\leq i\leq \min\{k,M-1\}} f(x_{k-i}).
\end{equation*}
It has been observed that such an nonmonotone line search is specially suitable for BB-like methods \cite{dai2001adaptive}.

Our specific gradient projection algorithm combining with the above nonmonotone line search is stated as Algorithm \ref{al1}.
It is proved in \cite{dai2001adaptive} that when the objective function is Lipschitz continuously differentiable,
Algorithm \ref{al1}  ensures convergence in the sense that
\begin{equation*}
  \lim\inf_{k\rightarrow\infty}\|g_k\|=0.
\end{equation*}

\begin{algorithm}[h]
\caption{Gradient method for bound constrained minimization}\label{al1}
\begin{algorithmic}[1]

\STATE Initialization: $x_{1}\in \mathbb{R}^n$, $\epsilon,\sigma\in(0,1)$, $M,h,s\in\mathbb{N}$, $\alpha_{1}\in[\alpha_{\min},\alpha_{\max}]$.

\WHILE{$\|g_k\|>\epsilon$}

\STATE Compute the search direction $d_k$ by \eqref{dirc};

\STATE Determine $\lambda_{k}$ by nonmonotone line search (\ref{nonmls}) and (\ref{nonmls-2});

\STATE $x_{k+1}=x_{k}+\lambda_{k}d_{k}$;

\IF{$s_{k}^Ty_{k}>0$}

\IF{$\textrm{mod}(k,h+s)\geq h$}

\STATE Compute $\bar{\alpha}_{k}$ by \eqref{newsunc};\\

\IF{$\bar{\alpha}_{k}>0$}

\STATE $\tilde{\alpha}_{k+1} = \min\{\bar{\alpha}_{k},\bar{\alpha}_{k+1}^P\}$;\\

\ELSE

\STATE $\tilde{\alpha}_{k+1} = \bar{\alpha}_{k+1}^{BB2}$;\\

\ENDIF

\ELSE
\STATE $\tilde{\alpha}_{k+1} = \bar{\alpha}_{k+1}^{P}$;\\
\ENDIF

\STATE $\alpha_{k+1} = \max\{\alpha_{\min},\min\{\tilde{\alpha}_{k+1},\alpha_{\max}\}\};$

\ELSE
\STATE $\alpha_{k+1} = 1/\|g_{k+1}\|;$
\ENDIF
\ENDWHILE
\end{algorithmic}
\end{algorithm}


For Algorithm \ref{al1}, we have the following additional comments.
\begin{remark}
When $s_{k}^Ty_{k}\leq 0$, both $\bar{\alpha}_{k+1}^{BB1}$ and $\bar{\alpha}_{k+1}^{BB2}$ are not
well-defined. In this case, we simply take the stepsize $\alpha_{k+1} = 1/\|g_{k+1}\|$.
Moreover, when the stepsize $\bar{\alpha}_{k}$ is negative, we would like to take the shorter
stepsize $\bar{\alpha}_{k+1}^{BB2}$, since $\bar{\alpha}_{k+1}^{BB2}=\min\{\bar{\alpha}_{k+1}^{P},\bar{\alpha}_{k+1}^{BB1},\bar{\alpha}_{k+1}^{BB2}\}$. Here, $0 <\alpha_{\min} << \alpha_{\max}$ serves as the stepsize
safeguards.
\end{remark}

We would also propose two variants of Algorithm \ref{al1}.
As mentioned in Section 2, a small $\bar{\alpha}_{k}$ will be generated if there are
subsequences $\{\alpha_{k_i}\}$ approximating $\frac{1}{\lambda_i}$ for all but
the first a few largest eigenvalues. It has been pointed out in \cite{dai2005analysis,yuan2008step} that the BB method reduces the gradient components more or less at the same asymptotic rate. In other words, the BB stepsize will approximate all the reciprocals of eigenvalues during the iteration process. Similar observations have been presented in \cite{frassoldati2008new}. Thus, for quadratic problem (\ref{eqpro}) we may consider
the following two variants of (\ref{news2}), which combine BB stepsizes with the new stepsize (\ref{baralp}):
\begin{equation}\label{news3}
\alpha_{k}=
\begin{cases}
\alpha_{k}^{BB1},& \text{if $\textrm{mod}(k,h+s)<h$}; \\
\min\{\alpha_{k}^{BB1},\bar{\alpha}_{k-1}\},& \text{otherwise},
\end{cases}
\end{equation}
and
\begin{equation}\label{news4}
\alpha_{k}=
\begin{cases}
\alpha_{k}^{BB2},& \text{if $\textrm{mod}(k,h+s)<h$}; \\
\min\{\alpha_{k}^{BB2},\bar{\alpha}_{k-1}\},& \text{otherwise}.
\end{cases}
\end{equation}
In fact, we have also found reasonably good numerical performances of the methods \eqref{news3} and \eqref{news4}
for minimizing quadratic functions. To generalize the methods \eqref{news3} and \eqref{news4} for
bound constrained optimization, we can replace $\bar{\alpha}_{k+1}^{P}$ in lines 10 and 15 by
$\bar{\alpha}_{k+1}^{BB1}$ and $\bar{\alpha}_{k+1}^{BB2}$, respectively. In what follows, we refer to
Algorithm \ref{al1} using $\bar{\alpha}_{k+1}^{P}$, $\alpha_{k+1}^{BB1}$ and $\bar{\alpha}_{k+1}^{BB2}$
in lines 10 and 15 as A1, A1-BB1 and A1-BB2, respectively.


\section{Numerical results}\label{sec5}
In this section, we do  numerical experiments of the proposed methods
for solving both quadratic and bound constrained optimization problems.
All our codes were written in Matlab.

\subsection{Quadratic problems}
Firstly, we compare our methods \eqref{news} and \eqref{news2} with the DY method \eqref{sdy} in \cite{dai2005analysis}, the ABB$_{\min2}$ method in \cite{frassoldati2008new}, and
the SDC method \eqref{ssdc} in \cite{de2014efficient} for minimizing quadratic problems.
Note that the SDC method has been shown performing better than its monotone variants \cite{de2014efficient}.
For all the comparison methods, the iteration stops when
\begin{equation}\label{eqstop}
  \|g_k\|\leq\epsilon\|g_1\|,
\end{equation}
where $\epsilon > 0$ is a given tolerance, or the iteration number exceeds 20,000.
Based on the observation from Figures \ref{appstep1} and \ref{appstep2}, we tested
$h$ with values 10 and 20 for our methods.
As in \cite{frassoldati2008new}, the parameter $\tau$ of the ABB$_{\min2}$ method was set to 0.9 for all the problems.

Our first set of test problems are quadratic problems (\ref{eqpro})
from \cite{dhl2018,dai2003altermin,friedlander1998gradient,zhou2006gradient},
whose Hessian have different spectral distributions.
In particular,  the objective function has Hessian $A=QVQ^T$ with
\begin{equation*}
  Q=(I-2w_3w_3^T)(I-2w_2w_2^T)(I-2w_1w_1^T),
\end{equation*}
where $w_1$, $w_2$, and $w_3$ are unitary random vectors,  $V=\textrm{diag}(v_1,\ldots,v_n)$ is a diagonal matrix with
 $v_1=1$ and $v_n=\kappa$, and $v_j$, $j=2,\ldots,n-1$, being  randomly generated between 1 and $\kappa$.
 The vector $b$ were randomly generated with components between $-10$ and $10$.
 Five sets of different spectral distributions of the test problems
 are given in Table \ref{tbspe} and the problem dimension is set as $n=1000$.
 For each problem set, three different values of condition number $\kappa$
 and tolerances $\epsilon$ are tested. For each value of $\kappa$ or $\epsilon$, $10$ problem instances
 were randomly generated. Tables \ref{tbrandp} and \ref{tbrandp2} show the average number of iterations over those instances
 with the starting point $x_1=(1,\ldots,1)^T$, where the parameter pair $(h,s)$ used for the SDC method was set to $(8,6)$ which is more efficient than other choices for this test set.

\begin{table}[h]
\tbl{Distributions of $v_j$.}
{\begin{tabular}{|c|c|}
\hline
 \multirow{1}{*}{Problem} &\multicolumn{1}{c|}{Spectrum} \\
\hline
\multirow{1}{*}{1} &$\{v_2,\ldots,v_{n-1}\}\subset(1,\kappa)$	\\
\hline
 \multirow{2}{*}{2}
&$\{v_2,\ldots,v_{n/5}\}\subset(1,100)$	\\
&$\{v_{n/5+1},\ldots,v_{n-1}\}\subset(\frac{\kappa}{2},\kappa)$	\\
\hline
\multirow{2}{*}{3}
&$\{v_2,\ldots,v_{n/2}\}\subset(1,100)$	\\
&$\{v_{n/2+1},\ldots,v_{n-1}\}\subset(\frac{\kappa}{2},\kappa)$	\\
\hline
\multirow{2}{*}{4}
&$\{v_2,\ldots,v_{4n/5}\}\subset(1,100)$	\\
&$\{v_{4n/5+1},\ldots,v_{n-1}\}\subset(\frac{\kappa}{2},\kappa)$	\\
\hline
\multirow{3}{*}{5}
&$\{v_2,\ldots,v_{n/5}\}\subset(1,100)$	\\
&$\{v_{n/5+1},\ldots,v_{4n/5}\}\subset(100,\frac{\kappa}{2})$	\\
&$\{v_{4n/5+1},\ldots,v_{n-1}\}\subset(\frac{\kappa}{2},\kappa)$	\\
\hline
\end{tabular}}
\label{tbspe}
\end{table}

We can see from Table \ref{tbrandp} that, our method \eqref{news} is  competitive with the DY, ABB$_{\min 2}$
and SDC methods. For a fixed $h$, larger values of $s$ seem to be preferable for the method \eqref{news}. In addition, different settings of $s$ lead to comparable results, with differences of less than 10\% in the number of iterations for most of the test problems. For the first problem set, our method \eqref{news} outperforms the DY and SDC methods, although the ABB$_{\min 2}$ method seems surprisingly efficient for this first problem set among the compared methods. Particularly, when a high accuracy is required, the method \eqref{news} with $(h,s)=(20,100)$ often
takes less than $\frac{1}{6}$ and $\frac{1}{4}$  number of iterations needed by the DY and SDC methods, respectively. As for the second to fourth problem sets, the method \eqref{news} with different settings performs better than the DY and ABB$_{\min 2}$ methods and also peforms better than the SDC method if proper $h$ and $s$ are selected.
\begin{table}[H]
\setlength{\tabcolsep}{0.2ex}
\tbl{Number of averaged iterations of the method \eqref{news}, DY, ABB$_{\min 2}$ and SDC on problems in Table \ref{tbspe}.}
{\begin{scriptsize}
\begin{tabular}{|c|c|c|c|c|c|c|c|c|c|c|c|c|c|c|}
\hline
 \multicolumn{1}{|c|}{\multirow{2}{*}{problem}} &\multicolumn{1}{c|}{\multirow{2}{*}{$\epsilon$}}
 &\multicolumn{10}{c|}{$(h,s)$ for the method \eqref{news}} &\multirow{2}{*}{DY} &\multirow{2}{*}{ABB$_{\min 2}$} &\multirow{2}{*}{SDC}\\
\cline{3-12}
 \multicolumn{1}{|c|}{}& \multicolumn{1}{c|}{}   &$(10,20)$ &$(10,30)$ &$(10,50)$ &$(10,80)$  &$(10,100)$  &$(20,20)$ &$(20,30)$ &$(20,50)$ &$(20,80)$  &$(20,100)$
  & & &\\
\hline
\multicolumn{1}{|c|}{\multirow{3}{*}{1}}
&\multicolumn{1}{c|}{\multirow{1}{*}{$10^{-6}$}}	 &332.9  &317.7  &317.1  &327.6  &340.1   &360.3  &326.6  &321.8  &324.0  &327.7             &\multirow{1}{*}{439.0} &\multirow{1}{*}{\textbf{249.3}} &\multirow{1}{*}{382.7}   \\
&\multicolumn{1}{c|}{\multirow{1}{*}{$10^{-9}$}}	 &2325.4 &1298.7 &1176.3 &875.2  &931.3   &1509.2 &1326.9 &1030.9 &789.6  &754.9     &\multirow{1}{*}{3979.5} &\multirow{1}{*}{\textbf{489.1}} &\multirow{1}{*}{2970.0} \\
&\multicolumn{1}{c|}{\multirow{1}{*}{$10^{-12}$}}  &4332.8 &2379.9 &2028.7 &1349.0 &1345.6  &2795.3 &2114.3 &1651.0 &1291.8 &1111.0 &\multirow{1}{*}{7419.6} &\multirow{1}{*}{\textbf{629.6}} &\multirow{1}{*}{5113.0} \\
\hline

\multicolumn{1}{|c|}{\multirow{3}{*}{2}}
&\multicolumn{1}{c|}{\multirow{1}{*}{$10^{-6}$}}   &243.2  &227.8  &236.2  &260.9  &281.5   &278.4  &243.9  &\textbf{226.9}  &237.0  &244.2              &\multirow{1}{*}{342.5} &\multirow{1}{*}{394.0} &\multirow{1}{*}{228.5}   \\
&\multicolumn{1}{c|}{\multirow{1}{*}{$10^{-9}$}}   &895.7  &823.7  &845.7  &906.7  &966.3   &873.8  &\textbf{803.0}  &825.7  &869.3  &854.6            &\multirow{1}{*}{1584.3} &\multirow{1}{*}{1504.7} &\multirow{1}{*}{891.9}  \\
&\multicolumn{1}{c|}{\multirow{1}{*}{$10^{-12}$}}  &1537.0 &1387.5 &1372.0 &1390.5 &1446.0  &1419.8 &1284.3 &\textbf{1257.4} &1359.0 &1359.3 &\multirow{1}{*}{2748.1} &\multirow{1}{*}{2362.4} &\multirow{1}{*}{1410.0} \\
\hline

\multicolumn{1}{|c|}{\multirow{3}{*}{3}}
&\multicolumn{1}{c|}{\multirow{1}{*}{$10^{-6}$}}   &315.0  &291.4  &315.1  &333.6  &357.2   &345.4 &303.3 &\textbf{272.5} &306.5 &311.4              &\multirow{1}{*}{473.4} &\multirow{1}{*}{471.4} &\multirow{1}{*}{293.5}   \\
&\multicolumn{1}{c|}{\multirow{1}{*}{$10^{-9}$}}   &977.9  &938.0  &938.3  &970.4  &1025.0  &1005.8 &869.2 &870.0 &910.2 &998.7         &\multirow{1}{*}{1766.6} &\multirow{1}{*}{1674.4} &\multirow{1}{*}{\textbf{860.8}}  \\
&\multicolumn{1}{c|}{\multirow{1}{*}{$10^{-12}$}}  &1577.8 &1510.6 &1515.0 &1556.8 &1587.9  &1636.8 &1363.8 &\textbf{1351.3} &1434.6 &1529.0 &\multirow{1}{*}{2846.7} &\multirow{1}{*}{2533.5} &\multirow{1}{*}{1397.7} \\
\hline

\multicolumn{1}{|c|}{\multirow{3}{*}{4}}
&\multicolumn{1}{c|}{\multirow{1}{*}{$10^{-6}$}}   &366.1  &346.9  &393.1  &393.6  &413.0   &405.8 &360.0 &\textbf{334.7} &382.0 &375.9              &\multirow{1}{*}{585.0} &\multirow{1}{*}{670.4} &\multirow{1}{*}{369.2}   \\
&\multicolumn{1}{c|}{\multirow{1}{*}{$10^{-9}$}}   &1069.5 &988.7  &996.2  &1039.7 &1063.1  &1077.5 &\textbf{904.1} &954.3 &982.5 &1029.1      &\multirow{1}{*}{1991.4} &\multirow{1}{*}{1644.8} &\multirow{1}{*}{977.2}  \\
&\multicolumn{1}{c|}{\multirow{1}{*}{$10^{-12}$}}  &1658.8 &1508.7 &1517.8 &1620.1 &1656.5  &1658.3 &1421.0 &\textbf{1392.1} &1492.0 &1565.5 &\multirow{1}{*}{3072.5} &\multirow{1}{*}{2559.2} &\multirow{1}{*}{1484.2} \\
\hline

\multicolumn{1}{|c|}{\multirow{3}{*}{5}}
&\multicolumn{1}{c|}{\multirow{1}{*}{$10^{-6}$}}   &826.3  &\textbf{797.6}  &845.9  &856.3  &909.4   &895.5 &876.1 &885.7 &877.7 &899.1            &\multirow{1}{*}{878.8} &\multirow{1}{*}{1041.9} &\multirow{1}{*}{934.2}  \\
&\multicolumn{1}{c|}{\multirow{1}{*}{$10^{-9}$}}   &3500.9 &3427.6 &3388.3 &3374.9 &3432.8  &3383.2 &3466.1 &3225.5 &3234.4 &\textbf{3156.2} &\multirow{1}{*}{4275.8} &\multirow{1}{*}{3293.9} &\multirow{1}{*}{4117.5} \\
&\multicolumn{1}{c|}{\multirow{1}{*}{$10^{-12}$}}  &5667.2 &5428.9 &5424.8 &5303.5 &5540.9  &5868.5 &5956.5 &5484.2 &5228.4 &\textbf{5153.8} &\multirow{1}{*}{7661.2} &\multirow{1}{*}{5310.5} &\multirow{1}{*}{6465.7} \\
\hline

\multicolumn{1}{|c|}{\multirow{3}{*}{total}}
&\multicolumn{1}{c|}{\multirow{1}{*}{$10^{-6}$}} &2083.5 &\textbf{1981.4} &2107.4 &2172.0 &2301.2 &2285.4 &2109.9 &2041.6 &2127.2 &2158.3 &2718.7 &2827.0 &2208.1                 \\
&\multicolumn{1}{c|}{\multirow{1}{*}{$10^{-9}$}}  &8769.4 &7476.7 &7344.8 &7166.9 &7418.5 &7849.5 &7369.3 &6906.4 &\textbf{6786.0} &6793.5 &13597.6 &8606.9 &9817.4               \\
&\multicolumn{1}{c|}{\multirow{1}{*}{$10^{-12}$}} &14773.6 &12215.6 &11858.3 &11219.9 &11576.9 &13378.7 &12139.9 &11136.0 &10805.8 &\textbf{10718.6} &23748.1 &13395.2 &15870.6   \\
\hline

\end{tabular}
\end{scriptsize}
}\label{tbrandp}
\end{table}

\begin{table}[h]
\setlength{\tabcolsep}{0.2ex}
\tbl{Number of averaged iterations of the method \eqref{news2}, DY, ABB$_{\min 2}$ and SDC on problems in Table \ref{tbspe}.}
{
\begin{scriptsize}
\begin{tabular}{|c|c|c|c|c|c|c|c|c|c|c|c|c|c|c|}
\hline
 \multicolumn{1}{|c|}{\multirow{2}{*}{problem}} &\multicolumn{1}{c|}{\multirow{2}{*}{$\epsilon$}}
 &\multicolumn{10}{c|}{$(h,s)$ for the method \eqref{news2}} &\multirow{2}{*}{DY} &\multirow{2}{*}{ABB$_{\min 2}$} &\multirow{2}{*}{SDC}\\
\cline{3-12}
 \multicolumn{1}{|c|}{}& \multicolumn{1}{c|}{}   &$(10,20)$ &$(10,30)$ &$(10,50)$ &$(10,80)$  &$(10,100)$  &$(20,20)$ &$(20,30)$ &$(20,50)$ &$(20,80)$  &$(20,100)$
  & & &\\
  \hline
\multicolumn{1}{|c|}{\multirow{3}{*}{1}}
&\multicolumn{1}{c|}{\multirow{1}{*}{$10^{-6}$}}	&345.2  &338.9  &351.7  &330.9  &344.4  &354.6 &335.1 &329.0 &336.2 &336.0             &\multirow{1}{*}{439.0} &\multirow{1}{*}{\textbf{249.3}} &\multirow{1}{*}{382.7}   \\
&\multicolumn{1}{c|}{\multirow{1}{*}{$10^{-9}$}}	&1833.5 &1537.8 &1408.0 &989.7  &826.0   &1602.5 &1147.4 &966.2 &932.7 &812.8      &\multirow{1}{*}{3979.5} &\multirow{1}{*}{\textbf{489.1}} &\multirow{1}{*}{2970.0} \\
&\multicolumn{1}{c|}{\multirow{1}{*}{$10^{-12}$}} &3569.5 &2340.2 &2249.1 &1405.7 &1124.8  &2293.7 &2169.0 &1630.1 &1304.4 &1068.6 &\multirow{1}{*}{7419.6} &\multirow{1}{*}{\textbf{629.6}} &\multirow{1}{*}{5113.0} \\
\hline

\multicolumn{1}{|c|}{\multirow{3}{*}{2}}
&\multicolumn{1}{c|}{\multirow{1}{*}{$10^{-6}$}}  &220.4  &\textbf{218.7}  &222.7  &224.9  &234.4  &278.0 &263.2 &244.6 &242.6 &255.8              &\multirow{1}{*}{342.5} &\multirow{1}{*}{394.0} &\multirow{1}{*}{228.5}   \\
&\multicolumn{1}{c|}{\multirow{1}{*}{$10^{-9}$}}  &821.9  &815.7  &760.9  &\textbf{740.7}  &807.4   &956.5 &910.4 &828.9 &818.4 &862.3            &\multirow{1}{*}{1584.3} &\multirow{1}{*}{1504.7} &\multirow{1}{*}{891.9}  \\
&\multicolumn{1}{c|}{\multirow{1}{*}{$10^{-12}$}} &1277.2 &1247.4 &\textbf{1214.4} &1216.1 &1216.3  &1544.0 &1466.5 &1344.9 &1307.5 &1364.2 &\multirow{1}{*}{2748.1} &\multirow{1}{*}{2362.4} &\multirow{1}{*}{1410.0} \\
\hline

\multicolumn{1}{|c|}{\multirow{3}{*}{3}}
&\multicolumn{1}{c|}{\multirow{1}{*}{$10^{-6}$}}  &\textbf{272.8}  &275.9  &284.6  &291.7  &309.0  &353.6 &333.0 &305.4 &303.8 &320.3              &\multirow{1}{*}{473.4} &\multirow{1}{*}{471.4} &\multirow{1}{*}{293.5}   \\
&\multicolumn{1}{c|}{\multirow{1}{*}{$10^{-9}$}}  &\textbf{842.9}  &848.6  &844.4  &889.3  &863.5   &1028.9 &964.7 &946.7 &929.0 &938.3           &\multirow{1}{*}{1766.6} &\multirow{1}{*}{1674.4} &\multirow{1}{*}{860.8}  \\
&\multicolumn{1}{c|}{\multirow{1}{*}{$10^{-12}$}} &1367.1 &1356.7 &1312.8 &1350.1 &\textbf{1305.4}  &1644.4 &1511.1 &1472.4 &1446.4 &1368.2 &\multirow{1}{*}{2846.7} &\multirow{1}{*}{2533.5} &\multirow{1}{*}{1397.7} \\
\hline

\multicolumn{1}{|c|}{\multirow{3}{*}{4}}
&\multicolumn{1}{c|}{\multirow{1}{*}{$10^{-6}$}}  &337.3  &348.8  &363.8  &344.2  &\textbf{333.9}  &413.9 &395.4 &400.2 &388.6 &392.4             &\multirow{1}{*}{585.0} &\multirow{1}{*}{670.4} &\multirow{1}{*}{369.2}   \\
&\multicolumn{1}{c|}{\multirow{1}{*}{$10^{-9}$}}  &894.3  &895.2  &\textbf{869.1}  &872.3  &874.8   &1101.8 &1029.6 &1014.8 &989.3 &977.1        &\multirow{1}{*}{1991.4} &\multirow{1}{*}{1644.8} &\multirow{1}{*}{977.2}  \\
&\multicolumn{1}{c|}{\multirow{1}{*}{$10^{-12}$}} &1389.2 &1385.7 &1364.3 &\textbf{1344.2} &1393.1  &1676.0 &1591.9 &1554.4 &1440.7 &1438.8&\multirow{1}{*}{3072.5} &\multirow{1}{*}{2559.2} &\multirow{1}{*}{1484.2} \\
\hline

\multicolumn{1}{|c|}{\multirow{3}{*}{5}}
&\multicolumn{1}{c|}{\multirow{1}{*}{$10^{-6}$}}  &\textbf{794.0}  &802.7  &811.9  &831.9  &876.0  &843.1 &832.3 &822.3 &805.9 &853.5              &\multirow{1}{*}{878.8} &\multirow{1}{*}{1041.9} &\multirow{1}{*}{934.2}  \\
&\multicolumn{1}{c|}{\multirow{1}{*}{$10^{-9}$}}  &3415.2 &3252.1 &3081.2 &3262.6 &2995.6  &3150.4 &3259.6 &2990.9 &\textbf{2980.3} &3045.4 &\multirow{1}{*}{4275.8} &\multirow{1}{*}{3293.9} &\multirow{1}{*}{4117.5} \\
&\multicolumn{1}{c|}{\multirow{1}{*}{$10^{-12}$}} &5492.1 &5272.7 &5102.2 &5102.0 &4982.8  &4861.3 &5150.9 &\textbf{4700.0} &5035.7 &4808.7 &\multirow{1}{*}{7661.2} &\multirow{1}{*}{5310.5} &\multirow{1}{*}{6465.7} \\
\hline

\multicolumn{1}{|c|}{\multirow{3}{*}{total}}
&\multicolumn{1}{c|}{\multirow{1}{*}{$10^{-6}$}}  &\textbf{1969.7} &1985.0 &2034.7 &2023.6 &2097.7 &2243.2 &2159.0 &2101.5 &2077.1 &2158.0 &2718.7 &2827.0 &2208.1                  \\
&\multicolumn{1}{c|}{\multirow{1}{*}{$10^{-9}$}}   &7807.8 &7349.4 &6963.6 &6754.6 &\textbf{6367.3} &7840.1 &7311.7 &6747.5 &6649.7 &6635.9 &13597.6 &8606.9 &9817.4               \\
&\multicolumn{1}{c|}{\multirow{1}{*}{$10^{-12}$}}  &13095.1 &11602.7 &11242.8 &10418.1 &\textbf{10022.4} &12019.4 &11889.4 &10701.8 &10534.7 &10048.5 &23748.1 &13395.2 &15870.6   \\
\hline

\end{tabular}
\end{scriptsize}
}\label{tbrandp2}
\end{table}

\noindent The method \eqref{news} also performs better than the DY and SDC methods on the last problem set.
From the total number of iterations, we can see the overall performance of the method \eqref{news} is quite good.
Here, we want to point out that our method \eqref{news} and the DY method are monotone, while the ABB$_{\min 2}$
and SDC methods are not.

Table \ref{tbrandp2} shows the averaged number of iterations of our method \eqref{news2} for the first set of test problems. For comparison purposes, the results of the DY, ABB$_{\min 2}$ and SDC methods
are also listed here. Similar performance as the method \eqref{news} can be observed. In particular, for each accuracy level, our method \eqref{news2} takes around 30\% less total iterations than the ABB$_{\min 2}$ method and also much less total iterations than the DY and SDC methods. Notice that problems of the last set are difficult for the compared methods since more iterations are needed than other four sets. However, the method \eqref{news2} always dominates the compared three methods except with the pair $(h,s)=(10,20)$. As compared with the method \eqref{news}, the retard strategy used in the method \eqref{news2} tends to improve the performance when $h=10$. For the case $h=20$, the method \eqref{news2} is also comparable to and better than \eqref{news} in terms of total number of iterations.

Our second set of quadratic test problems are the
two large-scale real problems Laplace1(a) and Laplace1(b) described in \cite{fletcher2005barzilai}. Both of the problems require the solution of a system of linear equations derived from a 3D Laplacian on a box, discretized using a standard 7-point finite difference stencil. The solution is fixed by a Gaussian function whose center is $(\alpha,\beta,\gamma)$, multiplied by $x(x-1)$. A parameter $\sigma$ is used to control the rate of decay of the Gaussian. Both Laplace1(a) and Laplace1(b) have $n=N^3$ variables, where $N$ is the interior
nodes taken in each coordinate direction, and have a highly sparse Hessian matrix with condition number
 $10^{3.61}$. We refer the readers to \cite{fletcher2005barzilai} for more details on these problems. In our tests, the associated parameters are set as follows:
\begin{align*}
&(a)~~ \sigma=20,~ \alpha=\beta=\gamma=0.5;\\
 &(b)~~ \sigma=50,~ \alpha=0.4,~ \beta=0.7,~ \gamma=0.5.
\end{align*}
We use the null vector as the starting point. The number of iterations required by the compared methods
for solving the two problems Laplace1(a) and Laplace1(b) are listed in Tables \ref{tbnLap} and \ref{tbnLap2}, respectively.

\begin{table}[h]
\setlength{\tabcolsep}{0.2ex}
\tbl{Number of iterations of the method \eqref{news}, DY, ABB$_{\min 2}$ and SDC on the 3D Laplacian problem.}
{
\begin{scriptsize}
\begin{tabular}{|c|c|c|c|c|c|c|c|c|c|c|c|c|c|c|}
\hline
 \multicolumn{1}{|c|}{\multirow{2}{*}{$n$}} &\multicolumn{1}{c|}{\multirow{2}{*}{$\epsilon$}}
 &\multicolumn{10}{c|}{$(h,s)$ for the method \eqref{news}} &\multirow{2}{*}{DY} &\multirow{2}{*}{ABB$_{\min 2}$} &\multirow{2}{*}{SDC}\\
\cline{3-12}
 \multicolumn{1}{|c|}{}& \multicolumn{1}{c|}{}  &$(10,20)$ &$(10,30)$ &$(10,50)$ &$(10,80)$  &$(10,100)$  &$(20,20)$ &$(20,30)$ &$(20,50)$ &$(20,80)$  &$(20,100)$
  & & &\\
 \hline
\multicolumn{15}{|c|}{Problem Laplace1(a)}\\
 \hline

 \multicolumn{1}{|c|}{\multirow{3}{*}{$60^{3}$}}
&\multicolumn{1}{c|}{\multirow{1}{*}{$10^{-6}$}}    &271  &241  &243  &197  &331   &321  &247  &211  &211  &241  &249  &\textbf{192}  &213   \\
&\multicolumn{1}{c|}{\multirow{1}{*}{$10^{-9}$}}    &348  &\textbf{257}  &421  &357  &334   &521  &351  &281  &303  &331  &373  &329  &393   \\
&\multicolumn{1}{c|}{\multirow{1}{*}{$10^{-12}$}}   &451  &394  &437  &452  &441   &523  &401  &\textbf{351}  &459  &482  &546  &401  &529   \\
 \hline

 \multicolumn{1}{|c|}{\multirow{3}{*}{$80^{3}$}}
&\multicolumn{1}{c|}{\multirow{1}{*}{$10^{-6}$}}    &315  &441  &362  &303  &331   &395  &426  &351  &301  &361  &383  &\textbf{289}  &297  \\
&\multicolumn{1}{c|}{\multirow{1}{*}{$10^{-9}$}}    &436  &480  &481  &510  &\textbf{349}   &521  &525  &420  &402  &407  &570  &430  &553  \\
&\multicolumn{1}{c|}{\multirow{1}{*}{$10^{-12}$}}   &602  &548  &601  &631  &\textbf{451}   &762  &677  &561  &526  &601  &789  &608  &705  \\
 \hline

 \multicolumn{1}{|c|}{\multirow{3}{*}{$100^{3}$}}
&\multicolumn{1}{c|}{\multirow{1}{*}{$10^{-6}$}}    &500  &482  &\textbf{301}  &371  &441   &441  &351  &421  &401  &361   &427  &351  &513   \\
&\multicolumn{1}{c|}{\multirow{1}{*}{$10^{-9}$}}    &691  &639  &541  &527  &565   &562  &\textbf{453}  &556  &457  &601   &651  &485  &609   \\
&\multicolumn{1}{c|}{\multirow{1}{*}{$10^{-12}$}}    &826  &900  &649  &808  &771   &881  &734  &701  &\textbf{602}  &721  &918  &687  &825   \\
\hline

 \multicolumn{1}{|c|}{\multirow{3}{*}{total}}
&\multicolumn{1}{c|}{\multirow{1}{*}{$10^{-6}$}}    &1086  &1164  &906   &871   &1103   &1157  &1024  &983  &913  &963  &1059  &\textbf{832}  &1023        \\
&\multicolumn{1}{c|}{\multirow{1}{*}{$10^{-9}$}}    &1475  &1376  &1443  &1394  &1248   &1604  &1329  &1257  &\textbf{1162}  &1339  &1594  &1244  &1555  \\
&\multicolumn{1}{c|}{\multirow{1}{*}{$10^{-12}$}}   &1879  &1842  &1687  &1891  &1663   &2166  &1812  &1613  &\textbf{1587}  &1804  &2253  &1696  &2059  \\
\hline

\multicolumn{15}{|c|}{Problem Laplace1(b)}\\
 \hline

\multicolumn{1}{|c|}{\multirow{3}{*}{$60^{3}$}}
&\multicolumn{1}{c|}{\multirow{1}{*}{$10^{-6}$}}    &327  &241  &241  &271  &331    &281  &351  &\textbf{211}  &245  &241    &236  &217  &213   \\
&\multicolumn{1}{c|}{\multirow{1}{*}{$10^{-9}$}}    &361  &361  &361  &\textbf{320}  &367    &521  &351  &352  &401  &361    &399  &365  &437   \\
&\multicolumn{1}{c|}{\multirow{1}{*}{$10^{-12}$}}   &509  &482  &481  &451  &\textbf{442}    &641  &451  &585  &502  &507    &532  &502  &555   \\
 \hline

 \multicolumn{1}{|c|}{\multirow{3}{*}{$80^{3}$}}
&\multicolumn{1}{c|}{\multirow{1}{*}{$10^{-6}$}}     &319  &361  &421  &361  &387   &321  &351  &\textbf{281}  &401  &295    &454  &294  &309   \\
&\multicolumn{1}{c|}{\multirow{1}{*}{$10^{-9}$}}     &511  &561  &468  &448  &551   &549  &477  &561  &502  &506    &567  &\textbf{433}  &485   \\
&\multicolumn{1}{c|}{\multirow{1}{*}{$10^{-12}$}}    &751  &702  &653  &658  &652   &801  &638  &739  &\textbf{601}  &669    &794  &634  &766   \\
 \hline

 \multicolumn{1}{|c|}{\multirow{3}{*}{$100^{3}$}}
&\multicolumn{1}{c|}{\multirow{1}{*}{$10^{-6}$}}    &393  &401  &396  &361  &532  &402  &393  &421  &425  &\textbf{361}       &371  &369  &379  \\
&\multicolumn{1}{c|}{\multirow{1}{*}{$10^{-9}$}}    &632  &635  &602  &631  &662  &801  &752  &701  &701  &707       &700  &\textbf{585}  &653  \\
&\multicolumn{1}{c|}{\multirow{1}{*}{$10^{-12}$}}   &931  &961  &902  &901  &991   &961  &1001  &937  &\textbf{802}  &1024    &1038  &880  &965   \\
\hline

 \multicolumn{1}{|c|}{\multirow{3}{*}{total}}
&\multicolumn{1}{c|}{\multirow{1}{*}{$10^{-6}$}}   &1039  &1003  &1058  &993   &1250   &1004  &1095  &913  &1071  &897  &1061  &\textbf{880}  &901        \\
&\multicolumn{1}{c|}{\multirow{1}{*}{$10^{-9}$}}   &1504  &1557  &1431  &1399  &1580   &1871  &1580  &1614  &1604  &1574  &1666  &\textbf{1383}  &1575   \\
&\multicolumn{1}{c|}{\multirow{1}{*}{$10^{-12}$}}  &2191  &2145  &2036  &2010  &2085   &2403  &2090  &2261  &\textbf{1905}  &2200  &2364  &2016  &2286   \\
\hline
\end{tabular}
\end{scriptsize}
}\label{tbnLap}
\end{table}

\begin{table}[h]
\setlength{\tabcolsep}{0.2ex}
\tbl{Number of iterations of the method \eqref{news2}, DY, ABB$_{\min 2}$ and SDC on the 3D Laplacian problem.}
{
\begin{scriptsize}
\begin{tabular}{|c|c|c|c|c|c|c|c|c|c|c|c|c|c|c|}
\hline
 \multicolumn{1}{|c|}{\multirow{2}{*}{$n$}} &\multicolumn{1}{c|}{\multirow{2}{*}{$\epsilon$}}
 &\multicolumn{10}{c|}{$(h,s)$ for the method \eqref{news2}} &\multirow{2}{*}{DY} &\multirow{2}{*}{ABB$_{\min 2}$} &\multirow{2}{*}{SDC}\\
\cline{3-12}
 \multicolumn{1}{|c|}{}& \multicolumn{1}{c|}{}  &$(10,20)$ &$(10,30)$ &$(10,50)$ &$(10,80)$  &$(10,100)$  &$(20,20)$ &$(20,30)$ &$(20,50)$ &$(20,80)$  &$(20,100)$
  & & &\\
 \hline
\multicolumn{15}{|c|}{Problem Laplace1(a)}\\
 \hline

 \multicolumn{1}{|c|}{\multirow{3}{*}{$60^{3}$}}
&\multicolumn{1}{c|}{\multirow{1}{*}{$10^{-6}$}}    &279  &208  &241  &209  &283   &241  &201  &212  &247  &241   &249  &\textbf{192}  &213   \\
&\multicolumn{1}{c|}{\multirow{1}{*}{$10^{-9}$}}    &421  &361  &308  &326  &331   &320  &301  &357  &301  &\textbf{241}   &373  &329  &393   \\
&\multicolumn{1}{c|}{\multirow{1}{*}{$10^{-12}$}}   &486  &424  &342  &380  &361   &363  &396  &427  &\textbf{322}  &367   &546  &401  &529   \\
 \hline

 \multicolumn{1}{|c|}{\multirow{3}{*}{$80^{3}$}}
&\multicolumn{1}{c|}{\multirow{1}{*}{$10^{-6}$}}    &301  &335  &\textbf{271}  &290  &303   &303  &278  &367  &308  &361   &383  &289  &297  \\
&\multicolumn{1}{c|}{\multirow{1}{*}{$10^{-9}$}}    &489  &490  &421  &376  &450   &\textbf{359}  &384  &561  &499  &481   &570  &430  &553  \\
&\multicolumn{1}{c|}{\multirow{1}{*}{$10^{-12}$}}   &676  &681  &552  &596  &551   &524  &585  &636  &515  &\textbf{495}   &789  &608  &705  \\
 \hline

 \multicolumn{1}{|c|}{\multirow{3}{*}{$100^{3}$}}
&\multicolumn{1}{c|}{\multirow{1}{*}{$10^{-6}$}}    &474  &361  &361  &541  &397   &441  &451  &421  &416  &361    &427  &\textbf{351}  &513   \\
&\multicolumn{1}{c|}{\multirow{1}{*}{$10^{-9}$}}    &721  &441  &\textbf{439}  &576  &561   &681  &536  &529  &601  &481    &651  &485  &609   \\
&\multicolumn{1}{c|}{\multirow{1}{*}{$10^{-12}$}}    &811  &761  &612  &740  &771   &786  &701  &644  &756  &\textbf{601}   &918  &687  &825   \\
\hline

 \multicolumn{1}{|c|}{\multirow{3}{*}{total}}
&\multicolumn{1}{c|}{\multirow{1}{*}{$10^{-6}$}}    &1054  &904  &873  &1040  &983  &985  &930  &1000  &971  &963  &1059  &\textbf{832}  &1023           \\
&\multicolumn{1}{c|}{\multirow{1}{*}{$10^{-9}$}}    &1631  &1292  &\textbf{1168}  &1278  &1342  &1360  &1221  &1447  &1401  &1203  &1594  &1244  &1555   \\
&\multicolumn{1}{c|}{\multirow{1}{*}{$10^{-12}$}}   &1973  &1866  &1506  &1716  &1683  &1673  &1682  &1707  &1593  &\textbf{1463}  &2253  &1696  &2059   \\
\hline

\multicolumn{15}{|c|}{Problem Laplace1(b)}\\
 \hline

\multicolumn{1}{|c|}{\multirow{3}{*}{$60^{3}$}}
&\multicolumn{1}{c|}{\multirow{1}{*}{$10^{-6}$}}     &229  &241  &241  &271  &258   &249  &277  &281  &305  &241   &236  &217  &\textbf{213}   \\
&\multicolumn{1}{c|}{\multirow{1}{*}{$10^{-9}$}}     &364  &361  &385  &335  &344   &396  &411  &\textbf{334}  &399  &368   &399  &365  &437   \\
&\multicolumn{1}{c|}{\multirow{1}{*}{$10^{-12}$}}    &546  &451  &541  &467  &551   &561  &443  &\textbf{413}  &505  &496   &532  &502  &555   \\
 \hline

 \multicolumn{1}{|c|}{\multirow{3}{*}{$80^{3}$}}
&\multicolumn{1}{c|}{\multirow{1}{*}{$10^{-6}$}}      &309  &278  &312  &\textbf{271}  &361   &292  &289  &363  &363  &460    &454  &294  &309   \\
&\multicolumn{1}{c|}{\multirow{1}{*}{$10^{-9}$}}      &521  &521  &547  &528  &551   &443  &507  &491  &501  &601    &567  &\textbf{433}  &485   \\
&\multicolumn{1}{c|}{\multirow{1}{*}{$10^{-12}$}}     &684  &692  &721  &619  &\textbf{614}   &681  &642  &631  &679  &721    &794  &634  &766   \\
 \hline

 \multicolumn{1}{|c|}{\multirow{3}{*}{$100^{3}$}}
&\multicolumn{1}{c|}{\multirow{1}{*}{$10^{-6}$}}     &391  &450  &355  &361  &402  &361  &385  &\textbf{351}  &362  &362   &371  &369  &379  \\
&\multicolumn{1}{c|}{\multirow{1}{*}{$10^{-9}$}}     &565  &657  &630  &707  &771  &561  &644  &\textbf{491}  &701  &650   &700  &585  &653  \\
&\multicolumn{1}{c|}{\multirow{1}{*}{$10^{-12}$}}    &\textbf{731}  &904  &972  &921  &771  &880  &911  &841  &814  &881   &1038  &880  &965   \\
\hline

 \multicolumn{1}{|c|}{\multirow{3}{*}{total}}
&\multicolumn{1}{c|}{\multirow{1}{*}{$10^{-6}$}}   &929  &969  &908  &903  &1021  &902  &951  &995  &1030  &1063  &1061  &\textbf{880}  &901              \\
&\multicolumn{1}{c|}{\multirow{1}{*}{$10^{-9}$}}    &1450  &1539  &1562  &1570  &1666  &1400  &1562  &\textbf{1316}  &1601  &1619  &1666  &1383  &1575    \\
&\multicolumn{1}{c|}{\multirow{1}{*}{$10^{-12}$}}   &1961  &2047  &2234  &2007  &1936  &2122  &1996  &\textbf{1885}  &1998  &2098  &2364  &2016  &2286    \\
\hline
\end{tabular}
\end{scriptsize}
}\label{tbnLap2}
\end{table}

From Table \ref{tbnLap} we can see that, for the problem Laplace1(a), our method \eqref{news} with a large $s$ outperforms the DY and SDC methods when the accuracy level is high. Moreover, with proper $(h,s)$, it performs better than the ABB$_{\min 2}$ method, which dominates the DY and SDC methods. For the problem Laplace1(b), our method \eqref{news} is also competitive with the compared methods.
Table \ref{tbnLap2} shows similar results as the former one. However, for the problem Laplace1(b), our method \eqref{news2} clearly outperforms the DY and SDC methods especially when high
accurate solutions are needed.
Moreover, the method \eqref{news2} is very competitive with the ABB$_{\min 2}$ method in terms of total number of iterations.

\subsection{Bound constrained problems}
This subsection compares our methods A1, A1-BB1 and A1-BB2 with the spectral projected gradient (SPG) method \cite{birgin2000nonmonotone,birgin2014spectral}, which is a nonmonotone projected gradient method
using the Barzilai-Borwein stepsize.


For our methods, the parameter values are set as the following:
\begin{equation*}
  \alpha_{\min}=10^{-30},~\alpha_{\max}=10^{30},~h=10,~s=4,~M=8,~\sigma=10^{-4}.
\end{equation*}
Default parameters were  used for SPG \cite{birgin2014spectral}.
The stopping condition for all
methods is
\begin{equation*}
  \|P_{\Omega}(x_k-g_k)-x_k\|_{\infty}\leq10^{-6}.
\end{equation*}

Our test problem set consists of all bound constrained problems from the CUTEst collection
 \cite{gould2015cutest} with dimension more than $50$. There are $3$ problems for which none of these
 comparison algorithms can solve. Hence, we simply delete them and only $47$ problems are left for our test.

We compare all these algorithms by using the performance profiles of Dolan and Mor\'{e} \cite{dolan2002}
on different metric. In these performance profiles, the vertical axis shows the percentage of the problems
the method solves within the factor $\tau$ of the metric used by the most effective method in this comparison.
Figure \ref{iter} shows the performance profiles on the number of iterations.
It can be observed that our methods A1, A1-BB1 and A1-BB2 clearly outperform SPG in terms of iteration numbers.
We can also see from Figure \ref{nfunc} that the performance gap is even larger in terms of
the number of function evaluations. Moreover, Figure \ref{time} shows that our methods are also better than
SPG in terms of the overall CPU time.

\begin{figure}[h]
  \centering
  \includegraphics[width=0.6\textwidth,height=0.47\textwidth]{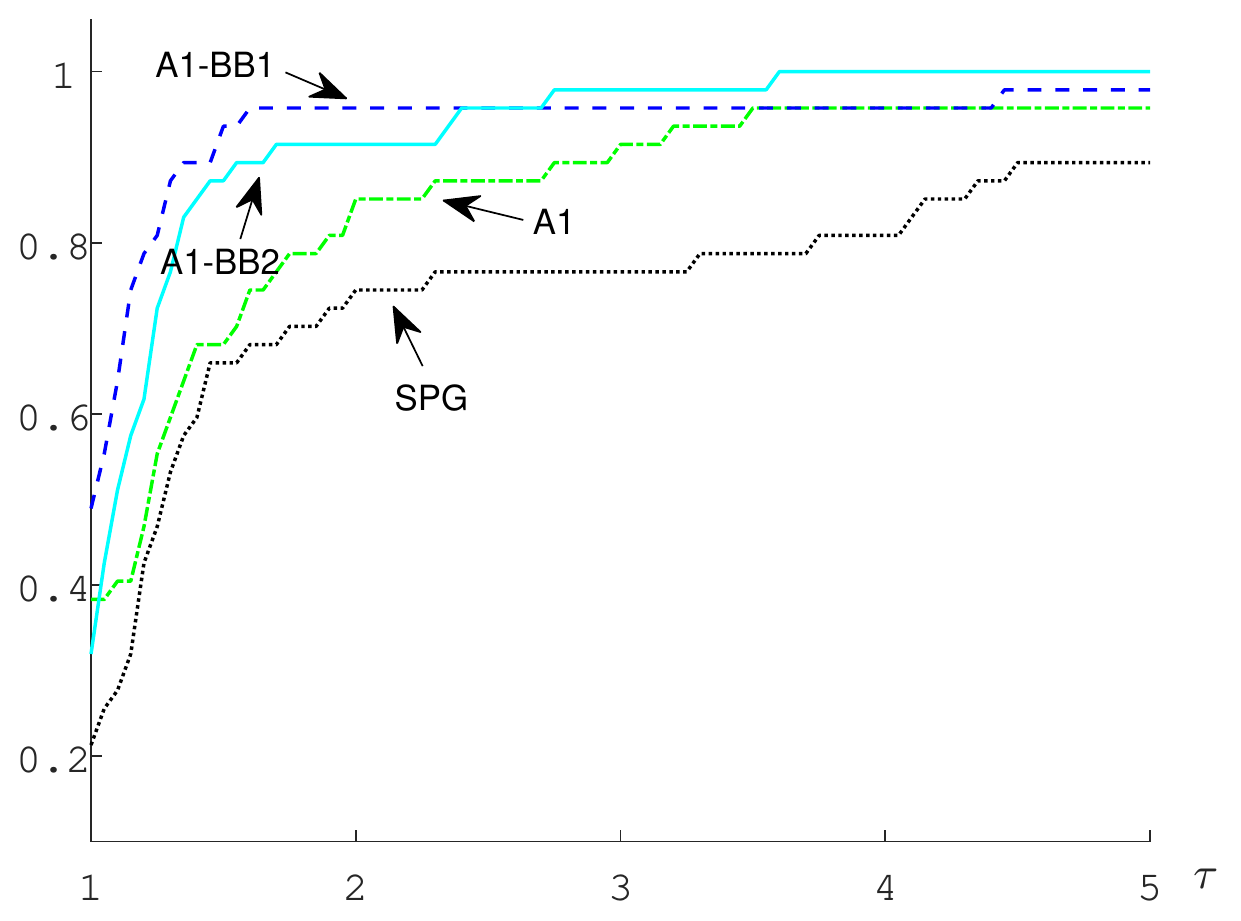}\\
  \caption{Performance profiles, iteration metric, 47 CUTEst bound constrained problems.}\label{iter}
\end{figure}

\begin{figure}[h]
  \centering
  \includegraphics[width=0.6\textwidth,height=0.47\textwidth]{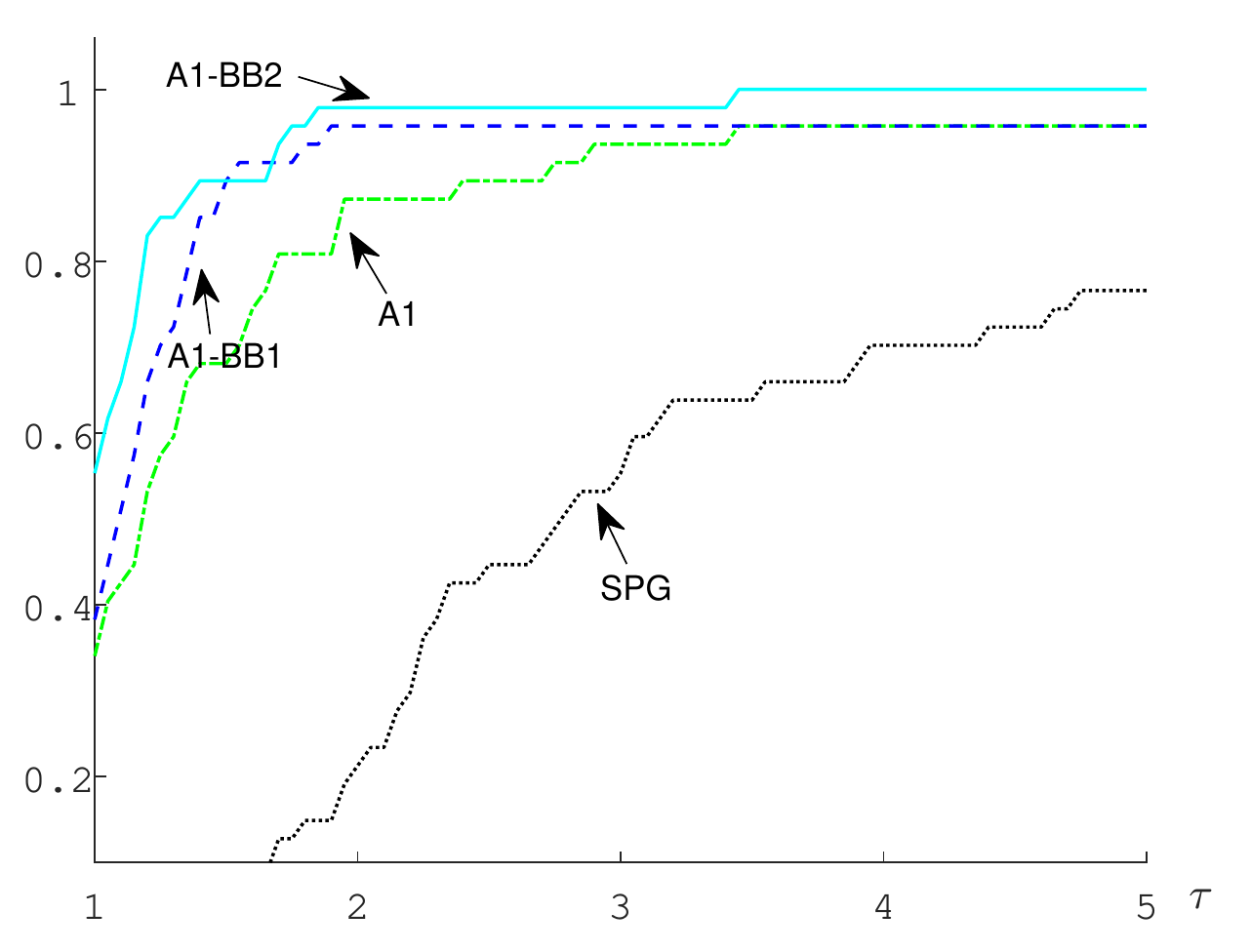}\\
  \caption{Performance profiles, function evaluation metric, 47 CUTEst bound constrained problems.}\label{nfunc}
\end{figure}

\begin{figure}[h]
  \centering
  \includegraphics[width=0.6\textwidth,height=0.47\textwidth]{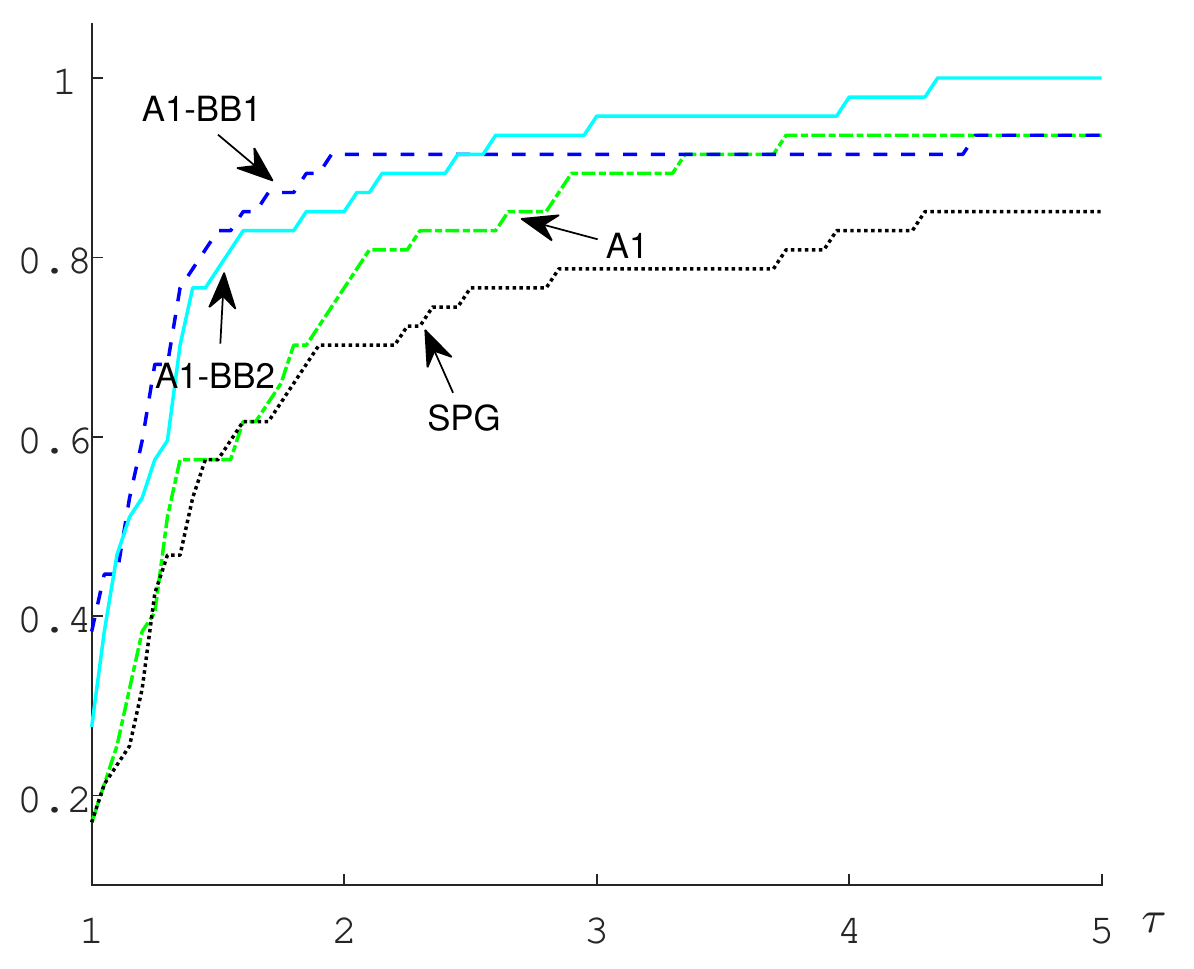}\\
  \caption{Performance profiles, CPU time metric, 47 CUTEst bound constrained problems.}\label{time}
\end{figure}

\section{Conclusions}\label{sec6}
Based on the asymptotic optimal stepsize, we have proposed a new monotone gradient method, which employs a new stepsize that converges to the reciprocal of the largest eigenvalue of the Hessian of the objective function. A nonmonotone variant of this method has been proposed as well. $R$-linear convergence of the proposed methods has been established for minimizing strongly convex quadratic
functions. Our numerical experiments on minimizing quadratic functions show that the proposed methods are very effective with other recent successful gradient methods.

By making use of projected gradient strategy and the Dai-Zhang nonmonotone line search \cite{dai2001adaptive}, the proposed methods are extended for solving general bound constrained optimization.
In addition, we have also proposed two variants of those methods based on the Barzilai-Borwein stepsizes. Numerical comparisons with the spectral projected gradient (SPG) method \cite{birgin2000nonmonotone,birgin2014spectral} on bound constrained problems from the CUTEst collection show that these new methods are very promising for solving bound constrained optimization.



%
%
%
%

\section*{Funding}

This work was supported by the National Natural Science Foundation of
China (11701137, 11631013, 11671116), by the National 973 Program of China
(2015CB856002), by the China Scholarship Council (No. 201806705007), and by the
USA National Science Foundation (1522654, 1819161).

%
%
%
%


\end{document}